\documentclass[10pt,twoside]{article}
\usepackage{amsmath,amsthm}

\setlength{\textheight}{21.6cm}

\setlength{\textwidth}{14cm}

\setlength{\oddsidemargin}{1cm}

\setlength{\evensidemargin}{1cm}

\pagestyle{myheadings}

\thispagestyle{empty}

\markboth{\small{ Sujit Kumar Sardar and Debabrata Mandal
}}{\small{Extension of Fuzzy h-ideals in $\Gamma$-hemirings}}

\date{}
\begin{document}

\centerline{}

\centerline{\Large{\bf Extension of Fuzzy h-ideals in $\Gamma$-hemirings}}

\centerline{}

\centerline{\bf{Sujit Kumar Sardar$^{1}$ and Debabrata Mandal$^{2}$}}


\centerline{Department of Mathematics}

\centerline{Jadavpur University, Kolkata}

\centerline{E-mail: sksardarjumath@gmail.com$^{1}$, dmandaljumath@gmail.com$^{2}$}


\centerline{}

\newtheorem{Theorem}{\quad Theorem}[section]

\newtheorem{Definition}[Theorem]{\quad Definition}

\newtheorem{Corollary}[Theorem]{\quad Corollary}

\newtheorem{Lemma}[Theorem]{\quad Lemma}

\newtheorem{Remark}[Theorem]{\quad Remark}

\newtheorem{Example}[Theorem]{\quad Example}

\newtheorem{Proposition}[Theorem]{Proposition}
\def\proofname{\indent {\sl\textbf{Proof.} }}

\begin{abstract} In this paper the concept of the extension of fuzzy h-ideals in one-sided $\Gamma$-hemiring is introduced and
some of its properties are investigated. Specially we have studied the extension of prime fuzzy h-ideals in $\Gamma$-hemirings and gave its characterization.
\end{abstract}
\textbf{2010 AMS Classification:} 03E72

\textbf{Keywords:} $\Gamma$-hemiring, h-ideal, h-bi-ideal, h-quasi-ideal, cartesian product, prime.

\section{Introduction}
There are many concepts of universal algebras generalizing an
associative ring $(R,+,\cdot )$.
 Some of them have been found to be very useful for solving problems
 in different areas of applied mathematics and information sciences. Semiring is one such concept. In 1934 Vandiver \cite{re:van} introduced this concept.
 The concept of a semiring generalizes that of a
ring, allowing the additive structure to be only a semigroup
instead of a group.
The set of nonnegative integers $Z_{0}^{+}$
with usual addition and multiplication provides a natural example
of a semiring.\\
But the situations  for the set of all nonpositive integers
$Z_{0}^{-}$ is
different. They do not form semirings with the above operations
because multiplication in the above sense is no longer a binary
compositions.\\
To solve this problem, in 1995 M.M.K. Rao\cite{Rao} introduced a new kind of algebraic
structure what is known as $\Gamma$-semiring which is a generalization of  semiring as well as of  $\Gamma-$ring.
To make $\Gamma$-semiring more effective,
 Dutta and Sardar introduced the
notion of operator semirings  \cite{re:Dutta} of a
$\Gamma$-semiring. By
using the interplay between the operator semirings and the
$\Gamma$-semiring they enriched the theory of $\Gamma$-semirings.\\
Ideals of semiring play a central role in the structure theory and
   useful for many purposes. However they do not in general coincide
   with the usual ring ideals(e.g. an ideal
of a semiring need not be the kernel of a semiring morphism) and for this reason, their use is somewhat
    limited in trying to obtain analogues of ring theorems for semiring.
    To solve this problem,
 Henriksen\cite{Henriksen}, Iizuka\cite{Iizuka} and
LaTorre\cite{LaT} investigated a more restricted class of ideals which are called h-ideals and k-ideals. Jun et al\cite{Ybjun} and Zhan et al\cite{Zhan} studied these ideals in
hemirings in terms of fuzzy subsets. We (\cite{SD1},\cite{SD2}) extended these concepts to the theory of
$\Gamma$-hemirings.\\
Motivated by Xie\cite{Xie}, as a continuation of this we study the concept of the extension of
fuzzy h-ideals in $\Gamma$-hemirings in \cite{SD8}. In that paper, we take the $\Gamma$-hemiring to be both-sided i.e. when $S$ is a $\Gamma$-hemiring, $\Gamma$ is also a $S$-hemiring.  Here we take the $\Gamma$-hemiring to be one-sided and study all those properties of \cite{SD8}.
\section{Preliminaries}

\begin{Definition} Let S and $\Gamma$ be two additive
commutative semigroups with zero. Then S is called a
$\Gamma$-hemiring if there exists a mapping $\\ S \times \Gamma
\times S \rightarrow S $ ( (a,$\alpha,$b) $\mapsto$ a$\alpha$b) satisfying the following conditions:\\
(i) $(a+b) \alpha c =a \alpha c+ b \alpha c$,\\
(ii) $ a \alpha (b+c)= a \alpha b+a \alpha c $,\\
(iii) $a (\alpha+\beta)b=a \alpha b +a \beta b$,\\
(iv) $ a \alpha (b \beta c)=(a \alpha b) \beta c $.\\
(v) $ 0_{S} \alpha a=0_{S} =a \alpha 0_{S},$\\
(vi)$ a0_{\Gamma}b=0_{S}=b 0_{\Gamma} a$\\
for all $a,b,c \in S $ and for all $\alpha, \beta \in \Gamma $.
\\
For simplification we write 0 instead of $0_{S}$ and $0_{\Gamma}$.\\
A $\Gamma$-hemiring S is called commutative if $a\alpha b=b\alpha a $ for all a,b$\in$S and $\alpha\in\Gamma$.
\end{Definition}
 Throughout this paper S denotes a $\Gamma$-hemiring with zero.

\begin{Definition} A left ideal A of a $\Gamma$-hemiring S is
called a left h-ideal if for any x, z $\in$ S and a, b $\in$ A,
x + a + z = b + z $\Rightarrow$ x $\in$ A.\\
A right h-ideal is defined analogously.
\end{Definition}
\begin{Definition} Let S be a $ \Gamma
$-hemiring. A proper h-ideal I of S is said to be prime if for any two
h-ideals H and K of S, $H\Gamma K \subseteq I$ implies that either $H
\subseteq I$ or $K \subseteq I$.
\end{Definition}

\begin{Theorem}\label{Th:2.3} \textnormal{\cite{SD2}}If I is an h-ideal of a $\Gamma$-hemiring S then the
following conditions are equivalent:\\
(i) I is a prime h-ideal of S.\\
(ii) If $a\Gamma S \Gamma b \subseteq I$ then either a $\in$ I or b
$\in$ I where a,b $\in$ S.
\end{Theorem}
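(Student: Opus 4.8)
The plan is to prove the equivalence $(i)\Leftrightarrow(ii)$ by establishing the two implications separately, using the definition of prime h-ideal together with an h-closure/generated-h-ideal argument. The key technical tool will be the principal h-ideal generated by an element. For this I first need to recall (or observe) that for $a\in S$ the smallest left h-ideal containing $a$, and similarly the smallest (two-sided) h-ideal containing $a$, can be described explicitly; in the one-sided setting we must be careful and work with the h-ideal generated by $a$, call it $\langle a\rangle_h$, which is the $h$-closure of the ordinary ideal $\mathbb{Z}^+_0 a + a + S\Gamma a + a\Gamma S + S\Gamma a\Gamma S$ (adapted to $\Gamma$-hemirings). The one subtlety compared with the both-sided case of \cite{SD8} is that $\Gamma$ is only a semigroup acting on $S$, so products like $a\Gamma S\Gamma b$ must be manipulated purely through the axioms (i)--(vi) of Definition~2.1 and the associativity axiom (iv).

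For the implication $(i)\Rightarrow(ii)$: assume $I$ is prime and $a\Gamma S\Gamma b\subseteq I$ with $a,b\in S$. Consider the h-ideals $H=\langle a\rangle_h$ and $K=\langle b\rangle_h$. The main step is to show $H\Gamma K\subseteq I$; since $I$ is prime this forces $H\subseteq I$ or $K\subseteq I$, and as $a\in H$, $b\in K$ we are done. To show $H\Gamma K\subseteq I$ one checks that every generator-times-generator product $h\gamma k$ with $h\in H$, $k\in K$ lies in $I$: expanding $h$ and $k$ as (h-closure limits of) sums of terms of the form $n a$, $s_1\alpha a$, $a\alpha s_2$, $s_1\alpha a\alpha s_2$ and similarly for $k$, distributivity reduces everything to terms each containing a factor pattern $a\,\Gamma\,(\text{something in }S)\,\Gamma\,b$, which lies in $a\Gamma S\Gamma b\subseteq I$; then one uses that $I$ is an h-ideal (closed under the $h$-relation and under addition and under $S$-multiplications) to conclude the sums and h-limits stay in $I$. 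The slightly delicate point is handling the terms with no ambient $S$ between $a$ and $b$ — e.g. $a\alpha b$ itself — which do not literally lie in $a\Gamma S\Gamma b$; here I would use $0\in S$ together with the identities of Definition~2.1, or rather argue that $a\alpha b$ already appears multiplied by something from $H$ or $K$ that supplies the missing middle factor, so that in fact only the full pattern $a\Gamma S\Gamma b$ ever occurs. This bookkeeping is where I expect most of the work.

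For the converse $(ii)\Rightarrow(i)$: assume (ii) holds, and let $H,K$ be h-ideals with $H\Gamma K\subseteq I$; suppose $H\not\subseteq I$, so pick $a\in H\setminus I$. For any $b\in K$ we have $a\Gamma S\Gamma b\subseteq H\Gamma K\subseteq I$ (using that $H$ is a left ideal absorbs $S\Gamma$ on the left, or that $H$ is right and $a\Gamma S\subseteq H$ — here the one-sidedness means I must pick the correct side: if $H$ is a left h-ideal then $S\Gamma a\subseteq H$ and I should instead write $b\Gamma S\Gamma a$ using $K$'s side, so the statement and proof must be arranged consistently with whether "ideal" means left or right throughout, matching Definition~2.2 and Definition~2.3). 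Applying (ii) with the element $a\notin I$ forces $b\in I$. Since $b\in K$ was arbitrary, $K\subseteq I$, proving primeness.

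The main obstacle, then, is not conceptual but is the careful handling of the one-sided hypotheses: in the both-sided paper \cite{SD8} one freely multiplies on either side, whereas here every product $x\Gamma y$ or $x\Gamma S\Gamma y$ must be kept on the side where the relevant ideal actually absorbs it, and the explicit form of the generated h-ideal $\langle a\rangle_h$ must be used rather than a simpler principal-ideal formula. I would therefore begin the write-up by fixing the convention (say all ideals two-sided, or state the left/right version explicitly), recording the description of $\langle a\rangle_h$ as a lemma-in-passing if not already available, and only then run the two implications above.
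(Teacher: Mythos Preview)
The paper does not contain a proof of this theorem at all: Theorem~\ref{Th:2.3} is stated in the Preliminaries with the citation \cite{SD2} and is quoted as a known result, so there is no ``paper's own proof'' to compare against. Your outline is the standard argument one finds in the source literature for this type of characterisation.

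That said, since you asked for feedback on the argument itself: the direction $(ii)\Rightarrow(i)$ is clean exactly as you wrote it (with $H,K$ two-sided h-ideals, $a\Gamma S\subseteq H$ gives $a\Gamma S\Gamma b\subseteq H\Gamma K\subseteq I$, and $(ii)$ finishes it). In the direction $(i)\Rightarrow(ii)$ you have correctly located the only real obstacle, namely the cross terms in $\langle a\rangle_h\,\Gamma\,\langle b\rangle_h$ that carry no interior $S$-factor (e.g.\ $a\gamma b$, $(s\alpha a)\gamma b$, $a\gamma(b\beta t)$). Your proposed fixes do not work as stated: invoking $0\in S$ only gives $a\alpha 0\gamma b=0$, which says nothing about $a\gamma b$, and the hope that ``the missing middle factor is always supplied'' fails for the bare product $a\gamma b$ of the generators themselves. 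The usual way around this in the $\Gamma$-hemiring/semiring literature is not to use the full principal h-ideals $\langle a\rangle_h$, $\langle b\rangle_h$, but rather the h-ideals $\overline{S\Gamma a\Gamma S}$ and $\overline{S\Gamma b\Gamma S}$, whose $\Gamma$-product visibly lands in $\overline{S\Gamma(a\Gamma S\Gamma b)\Gamma S}\subseteq I$; primeness then gives, say, $S\Gamma a\Gamma S\subseteq I$, and one bootstraps (apply $(i)$ once more to $\langle a\rangle_h\Gamma\langle a\rangle_h$, or argue via $a\Gamma S\Gamma a\subseteq I$) to conclude $a\in I$. If you want a self-contained write-up, that two-step refinement is what you should add.
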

\begin{Definition}\cite{SD5} Let $\mu $ and $\theta$ be two fuzzy sets of
a $\Gamma-$hemiring S. Define a generalized h-product of $\mu $ and $\theta$ by
$$
\begin{array}{ll}
\mu
o_{h}\theta(x)&=\underset{x+\displaystyle{\sum_{i=1}^{n}a_{i}\gamma_{i}b_{i}+z=
\sum_{i=1}^{n}c_{i}\delta_{i}d_{i}+z}}{\sup[\underset{i}{\min}\{\min
\{\mu(a_{i}),\mu(c_{i}),\theta(b_{i}), }
\theta(d_{i})\}\}]\\
&= 0, \textmd{if x cannot be expressed as above}
\end{array}
$$
where x,z,$a_{i},b_{i},c_{i},d_{i}\in $S and
$\gamma_{i},\delta_{i}\in \Gamma,$ for i=1,...,n.
\end{Definition}

\begin{Definition} Let $\mu$ be the non empty fuzzy subset of
 a $\Gamma$-hemiring S (i.e. $\mu(x) \neq 0 $ for some $x \in S
 $). Then $\mu $ is called a fuzzy left h-ideal [ fuzzy right h-ideal]
 of S if \\(i) $\mu(x+y) \geq \min \{ \mu(x), \mu(y)\}$ and
  \\~~~~~~~~(ii) $\mu(x \gamma y) \geq \mu(y) $ [resp. $\mu(x \gamma y) \geq
  \mu(x)]$ for all $ x,y \in S, \gamma \in \Gamma$.
    \\~~~~~~~~(iii) For all x,a,b,z$\in$S, x+a+z=b+z implies $\mu(x)\geq\min\{\mu(a),\mu(b)\}$.
    \\
A fuzzy ideal of a $\Gamma$-hemiring S is a non empty fuzzy subset
of S which is a fuzzy left ideal as well as a fuzzy right ideal of
S.\\
A fuzzy h-ideal $\mu$ of S is called a fuzzy h-bi-ideal, fuzzy h-interior ideal if for all x,y,z,a,b$\in$S and
 $\alpha,\beta\in\Gamma$, $\mu(x\alpha y\beta z)\geq\min\{\mu(x),\mu(z)\}$, $\mu(x\alpha y \beta z )\geq\mu(y)$,
 respectively.\\
 A fuzzy subset $\mu$ of a $\Gamma$-hemiring S is
 called fuzzy h-quasi-ideal if $\mu$ satisfies (i) and (iii) along with the condition $(\mu o_{h}\chi_{S})\cap(\chi_{S}
 o_{h}\mu)\subseteq\mu$, where $\chi_{S}$ is the characteristic function of S.
 \end{Definition}

Now we recall following definitions and result from \cite{Xma2} for subsequent use.
\begin{Definition} Let $\mu $ and $\theta$ be two fuzzy sets of
a $\Gamma-$hemiring S. Define h-product of $\mu $ and $\theta$ by\\
$\mu\Gamma_{h}\theta(x)=\underset{x+a_{1}\gamma b_{1}+z=a_{2}\delta
b_{2}+z}{\sup[\min\{\mu(a_{1}),\mu(a_{2}), }
\theta(b_{1}),\theta(b_{2})\}] \\
~~~~~~~~~~=0,\textmd{if x cannot be expressed as } x+a_{1}\gamma
b_{1}+z=a_{2}\delta b_{2}+z $\\
for x,z,$a_{1},a_{2},b_{1},b_{2}\in$ S and $\gamma, \delta \in
\Gamma.$
\end{Definition}

\begin{Definition} A fuzzy h-ideal $\mu$ of a $\Gamma$-hemiring S
is said to be prime(semiprime) if $\mu$ is not a constant function and
for any two fuzzy h-ideals $\sigma $ and $\theta$ of S, $\sigma \Gamma_{h}
\theta \subseteq \mu$ implies that either $\sigma \subseteq \mu$ or
$\theta \subseteq \mu$(resp. $\theta \Gamma_{h} \theta \subseteq \mu$
implies $\theta \subseteq \mu$).
\end{Definition}

\begin{Theorem}\label{Charc. Prime}Let $\mu$ be a fuzzy h-ideal of S. Then $\mu$ is a
prime fuzzy h-ideal of S if and only if the following conditions hold

(i) $\mu(0)=1$,

(ii) $ Im ~\mu =\{1, t\}, ~~t \in [0,1)$,

(iii) $\mu_{0}=\{x \in S: \mu(x)=\mu(0)\}$ is a prime h-ideal of S.
 \end{Theorem}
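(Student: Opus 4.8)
\emph{Proof strategy.} The plan is, in both directions, to apply the primeness condition to a carefully chosen pair of fuzzy h-ideals. The only background I would use, all routine, is: a fuzzy h-ideal $\mu$ satisfies $\mu(0)=\mu(0\gamma x)\ge\mu(x)$ for every $x$, so $\mu(0)=\sup_{x}\mu(x)$; every level set $\mu_r=\{x:\mu(x)\ge r\}$ with $r\le\mu(0)$ is an h-ideal; and for any h-ideal $A$ of $S$ the characteristic function $\chi_A$ is a fuzzy h-ideal, as is any constant function with value in $(0,1]$.

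\emph{Sufficiency.} Assuming (i)--(iii), write $\mathrm{Im}\,\mu=\{1,t\}$, $t<1$, and $\mu_0=\{x:\mu(x)=1\}$. First I would verify that $\mu$ is a non-constant fuzzy h-ideal: in each defining inequality the right-hand side is either $t$, when the inequality is automatic since $\mu\ge t$, or equals $1$, which happens only when the relevant arguments lie in $\mu_0$, and then the conclusion follows because $\mu_0$ is closed under addition, under multiplication, and has the h-property. For primeness, suppose $\sigma,\theta$ are fuzzy h-ideals with $\sigma\Gamma_h\theta\subseteq\mu$ but $\sigma\not\subseteq\mu$, $\theta\not\subseteq\mu$. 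Pick $a,b$ with $\sigma(a)>\mu(a)$, $\theta(b)>\mu(b)$; necessarily $\mu(a)=\mu(b)=t$ (so $a,b\notin\mu_0$) and $\sigma(a),\theta(b)>t$. Since $\mu_0$ is prime, Theorem \ref{Th:2.3} in contrapositive form produces $s\in S$ and $\gamma,\delta\in\Gamma$ with $c:=a\gamma s\delta b\notin\mu_0$, i.e.\ $\mu(c)=t$. Plugging the representation $c+0\gamma 0+0=a\gamma(s\delta b)+0$ into the definition of the h-product and using $\sigma(0)\ge\sigma(a)>t$, $\theta(0)\ge\theta(b)>t$ and $\theta(s\delta b)\ge\theta(b)>t$ (the last two from the fuzzy left h-ideal property), one gets $\sigma\Gamma_h\theta(c)>t=\mu(c)$, contradicting $\sigma\Gamma_h\theta\subseteq\mu$. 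Hence $\mu$ is a prime fuzzy h-ideal.

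\emph{Necessity.} Assume $\mu$ is prime; then $\mu$ is non-constant. For \textbf{(i) and (ii)}, suppose toward a contradiction that $\mu$ attains two values $r_1<r_2$ with $r_2<1$. Then $\mu_{r_2}$ is an h-ideal, and $\sigma:=\chi_{\mu_{r_2}}$ together with $\theta:=$ (the constant function $r_2$) are fuzzy h-ideals with $\sigma\not\subseteq\mu$ (at any $x$ with $\mu(x)=r_2<1$) and $\theta\not\subseteq\mu$ (at any $x$ with $\mu(x)=r_1<r_2$). Moreover $\sigma\Gamma_h\theta\subseteq\mu$: if $\sigma\Gamma_h\theta(x)>0$ then some representation $x+a_1\gamma b_1+z=a_2\delta b_2+z$ has $a_1,a_2\in\mu_{r_2}$, hence $a_1\gamma b_1,a_2\delta b_2\in\mu_{r_2}$, hence $x\in\mu_{r_2}$, so $\mu(x)\ge r_2=\sigma\Gamma_h\theta(x)$. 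This contradicts primeness, so $\mu$ attains no two values both below $1$; since $\mu$ is non-constant and $\mu(0)=\sup\mu$, this forces $\mu(0)=1$ and $\mathrm{Im}\,\mu=\{1,t\}$ with $t\in[0,1)$. For \textbf{(iii)}, note $\mu_0=\{x:\mu(x)=1\}$ is a proper h-ideal ($\mu$ non-constant). Given h-ideals $H,K$ of $S$ with $H\Gamma K\subseteq\mu_0$, set $\sigma:=\chi_H$, $\theta:=\chi_K$; since a single product $h\gamma k$ with $h\in H$, $k\in K$ lies in $H\Gamma K\subseteq\mu_0$ and $\mu_0$ is h-closed, we get $\{x:\sigma\Gamma_h\theta(x)=1\}\subseteq\mu_0$, so $\sigma\Gamma_h\theta\subseteq\chi_{\mu_0}\subseteq\mu$ (using $\mu(0)=1$). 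Primeness of $\mu$ gives $\chi_H\subseteq\mu$ or $\chi_K\subseteq\mu$, i.e.\ $H\subseteq\mu_0$ or $K\subseteq\mu_0$; thus $\mu_0$ is prime.

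The routine work consists of checking that the functions above are fuzzy h-ideals and of evaluating the h-products. The genuine content, and the place where one can go astray, is the choice of the test pairs: for (i)--(ii) one pairs the characteristic function of the level h-ideal $\mu_{r_2}$ with the constant $r_2$, exploiting that their h-product is supported inside $\mu_{r_2}$; and for (iii) one must argue from the ideal-theoretic definition of a prime h-ideal rather than from the element-wise criterion of Theorem \ref{Th:2.3}, because a bare product $a\gamma b$ need not lie in $a\Gamma S\Gamma b$, whereas $h\gamma k$ does lie in the ideal product $H\Gamma K$. I expect the correct identification of these pairs, and the verification that in each case the h-product lands inside $\mu$ while neither factor does, to be the main obstacle.
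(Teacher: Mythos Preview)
The paper does not actually prove this theorem; it is stated without proof as a result recalled from \cite{Xma2}, so there is no ``paper's own proof'' to compare against. Your proposal, read on its own, is a correct and complete argument: the sufficiency direction correctly exploits Theorem~\ref{Th:2.3} to find an element $c=a\gamma s\delta b\notin\mu_0$ and then exhibits an explicit h-product representation showing $\sigma\Gamma_h\theta(c)>t=\mu(c)$; the necessity direction uses two well-chosen test pairs $(\chi_{\mu_{r_2}},\,r_2)$ and $(\chi_H,\chi_K)$, and the computations that their h-products land inside $\mu$ are right. One minor remark: in the sufficiency part you do not need to re-verify that $\mu$ is a fuzzy h-ideal, since this is a standing hypothesis of the theorem; only non-constancy is needed, and that is immediate from $\mathrm{Im}\,\mu=\{1,t\}$ with $t<1$.
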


\section{Fuzzy h-ideal extension in $\Gamma$-hemirings}

\begin{Definition} Let $\mu$ be a fuzzy subset of S and $x \in S$. Then the fuzzy subset $<x,\mu>$ of S, defined by
$$<x,\mu>(y)=\underset{s\in S}{\underset{\alpha,\gamma\in\Gamma} {\inf}}\mu(x \alpha s \gamma y)$$ for all $y \in S$, is
called the extension of $\mu$ by x.
\end{Definition}

\begin{Theorem}\label{Prop 3.2} Let $\mu$ is a fuzzy right h-ideal of S and $x \in S$. Then the extension $<x,
\mu>$ is a fuzzy right h-ideal of S.
\end{Theorem}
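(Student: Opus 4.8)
The plan is to verify the three defining conditions of a fuzzy right $h$-ideal (Definition above, clauses (i), (ii) with the right-sided inequality, and (iii)) for the fuzzy subset $\langle x,\mu\rangle$, in each case reducing the claim to the corresponding property of $\mu$ by manipulating the defining infimum $\langle x,\mu\rangle(y)=\inf_{s\in S,\ \alpha,\gamma\in\Gamma}\mu(x\alpha s\gamma y)$. First I would show $\langle x,\mu\rangle$ is nonempty: since $\mu$ is a fuzzy right $h$-ideal it is not identically zero, and one checks $\langle x,\mu\rangle(y)\ge$ something positive, or more simply that $\langle x,\mu\rangle$ inherits $\langle x,\mu\rangle(0)$ being large; in any case nonemptiness follows from $\mu(0)\ge\mu(z)$ for all $z$ together with clause (iii)/(ii) of $\mu$.

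For the additive condition, fix $y_1,y_2\in S$. For every $s\in S$ and $\alpha,\gamma\in\Gamma$ we have $x\alpha s\gamma(y_1+y_2)=x\alpha s\gamma y_1+x\alpha s\gamma y_2$ by distributivity (Definition of $\Gamma$-hemiring, clause (ii)), so $\mu(x\alpha s\gamma(y_1+y_2))\ge\min\{\mu(x\alpha s\gamma y_1),\mu(x\alpha s\gamma y_2)\}\ge\min\{\langle x,\mu\rangle(y_1),\langle x,\mu\rangle(y_2)\}$; taking the infimum over $s,\alpha,\gamma$ on the left gives $\langle x,\mu\rangle(y_1+y_2)\ge\min\{\langle x,\mu\rangle(y_1),\langle x,\mu\rangle(y_2)\}$. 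For the right-ideal condition, fix $y\in S$ and $\beta\in\Gamma$, and take any $w\in S$; I would show $\langle x,\mu\rangle(y\beta w)\ge\langle x,\mu\rangle(y)$. The point is that $x\alpha s\gamma(y\beta w)=(x\alpha s\gamma y)\beta w$ by associativity (clause (iv)), so $\mu(x\alpha s\gamma(y\beta w))=\mu((x\alpha s\gamma y)\beta w)\ge\mu(x\alpha s\gamma y)$ since $\mu$ is a fuzzy right $h$-ideal; taking infima over $s,\alpha,\gamma$ yields the claim.

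For the $h$-condition, suppose $y+a+z=b+z$ in $S$ with $y,a,b,z\in S$; I must show $\langle x,\mu\rangle(y)\ge\min\{\langle x,\mu\rangle(a),\langle x,\mu\rangle(b)\}$. Fix $s\in S$ and $\alpha,\gamma\in\Gamma$. Applying the map $t\mapsto x\alpha s\gamma t$ (which is additive by clause (ii)) to the equation $y+a+z=b+z$ gives $x\alpha s\gamma y+x\alpha s\gamma a+x\alpha s\gamma z=x\alpha s\gamma b+x\alpha s\gamma z$; since $\mu$ satisfies clause (iii), $\mu(x\alpha s\gamma y)\ge\min\{\mu(x\alpha s\gamma a),\mu(x\alpha s\gamma b)\}\ge\min\{\langle x,\mu\rangle(a),\langle x,\mu\rangle(b)\}$. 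Taking the infimum over $s,\alpha,\gamma$ on the left gives the desired inequality. I do not anticipate a serious obstacle here; the only mild subtlety is the nonemptiness check and making sure the infimum manipulations go in the correct direction (one always bounds a single term $\mu(x\alpha s\gamma\,\cdot\,)$ below by the infimum $\langle x,\mu\rangle(\cdot)$ before passing to the infimum on the other side), so the main care is bookkeeping rather than any real difficulty.
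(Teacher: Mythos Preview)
Your proposal is correct and follows essentially the same route as the paper's own proof: verify the three clauses (additivity, right-multiplicative, and the $h$-closure condition) by pushing $x\alpha s\gamma(\cdot)$ through the relevant identity via distributivity/associativity, applying the corresponding property of $\mu$ pointwise, and then taking the infimum over $s,\alpha,\gamma$. The only addition on your side is the nonemptiness remark, which the paper leaves implicit.
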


\begin{proof}
Let p,q,a,b,z$\in$S and $\beta\in\Gamma$. Then\\
$$
\begin{array}{ll}
<x,\mu>(p+q) &=\underset{s\in S}{\underset{\alpha,\gamma\in\Gamma} {\inf}}\mu(x\alpha s\gamma(p+q))\\
&=\underset{s\in S}{\underset{\alpha,\gamma\in\Gamma} {\inf}}\mu(x\alpha s\gamma p+x\alpha s\gamma q))\\
&\geq\underset{s\in S}{\underset{\alpha,\gamma\in\Gamma} {\inf}}\min\{\mu(x\alpha s\gamma p),\mu(x\alpha s\gamma q)\}\\
&=\min\{\underset{s\in S}{\underset{\alpha,\gamma\in\Gamma} {\inf}}\mu(x\alpha s\gamma p),\underset{s\in S}{\underset{\alpha,\gamma\in\Gamma} {\inf}}\mu(x\alpha s \gamma p)\}\\
&=\min\{<x,\mu>(p),<x,\mu>(q)\}
\end{array}
$$
Also,
$$
\begin{array}{ll}
<x,\mu>(p\beta q) &=\underset{s\in S}{\underset{\alpha,\gamma\in\Gamma} {\inf}} \mu(x\alpha s \gamma p\beta q)
\geq \underset{s\in S}{\underset{\alpha,\gamma\in\Gamma} {\inf}}\mu(x\alpha s\gamma p)=<x,\mu>(p)
\end{array}
$$
Now let p+a+z=b+z. So, $x\alpha s\gamma p+x\alpha s\gamma a+x\alpha s\gamma z=x\alpha s \gamma b+x\alpha s\gamma z.$  Then\\
$$
\begin{array}{ll}
<x,\mu>(p) &=\underset{s\in S}{\underset{\alpha,\gamma\in\Gamma} {\inf}}\mu(x\alpha s\gamma p)\\
&\geq\underset{s\in S}{\underset{\alpha,\gamma\in\Gamma} {\inf}}\min\{\mu(x\alpha s\gamma a),\mu(x\alpha s\gamma b)\}\\
&=\min\{\underset{s\in S}{\underset{\alpha,\gamma\in\Gamma} {\inf}}\mu(x\alpha s\gamma a),\underset{s\in S}{\underset{\alpha,\gamma\in\Gamma} {\inf}}\mu(x\alpha s\gamma b)\}\\
&=\min\{<x,\mu>(a),<x,\mu>(b)\}
\end{array}
$$
Hence $<x,\mu>$ is a fuzzy right h-ideal of S.
\end{proof}
\textbf{Note.} If $\mu$ is a fuzzy h-ideal of a commutative
$\Gamma$-hemiring S and $x \in S$, then the extension $<x, \mu>$ is
a fuzzy h-ideal of S.
\begin{Proposition} If $\mu_{i}, i=1,2,...$ be an arbitrary collection of fuzzy h-ideal of S, then
$<x,\underset{i}{\cap}\mu_{i}>$ is also a fuzzy h-ideal of S.
\end{Proposition}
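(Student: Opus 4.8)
The plan is to reduce the statement to two facts: first, that the extension operation $\langle x,-\rangle$ commutes with arbitrary intersections of fuzzy subsets; second, that an arbitrary intersection of fuzzy $h$-ideals is again a fuzzy $h$-ideal. Granting both, we write $\langle x,\bigcap_i\mu_i\rangle=\bigcap_i\langle x,\mu_i\rangle$, and since each $\langle x,\mu_i\rangle$ is a fuzzy $h$-ideal by Theorem~\ref{Prop 3.2} (applied on the right side, together with its left-hand analogue), the intersection on the right is a fuzzy $h$-ideal, hence so is the left side.

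For the commutation identity, I would unwind the definitions directly. For any $y\in S$,
$$
\left\langle x,\bigcap_i\mu_i\right\rangle(y)
=\inf_{\substack{s\in S\\ \alpha,\gamma\in\Gamma}}\Bigl(\bigcap_i\mu_i\Bigr)(x\alpha s\gamma y)
=\inf_{\substack{s\in S\\ \alpha,\gamma\in\Gamma}}\;\inf_i\;\mu_i(x\alpha s\gamma y),
$$
and one can interchange the two infima (an infimum over a product index set equals the iterated infimum in either order), obtaining
$$
\inf_i\;\inf_{\substack{s\in S\\ \alpha,\gamma\in\Gamma}}\mu_i(x\alpha s\gamma y)
=\inf_i\langle x,\mu_i\rangle(y)
=\Bigl(\bigcap_i\langle x,\mu_i\rangle\Bigr)(y).
$$
This step is purely formal and carries no real difficulty.

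For the second fact, I would verify the three defining conditions of a fuzzy $h$-ideal for $\nu:=\bigcap_i\nu_i$ when each $\nu_i$ is a fuzzy $h$-ideal. For condition (i), $\nu(p+q)=\inf_i\nu_i(p+q)\ge\inf_i\min\{\nu_i(p),\nu_i(q)\}=\min\{\inf_i\nu_i(p),\inf_i\nu_i(q)\}=\min\{\nu(p),\nu(q)\}$, where the middle equality again uses commutation of $\inf$ with $\min$ over the index set; conditions (ii) (both the left and right product inequalities, so that $\nu$ is a two-sided fuzzy $h$-ideal) and (iii) follow by the same template. One should also note $\nu$ is nonempty whenever, say, all $\nu_i$ share a common point where they are nonzero — in the intended application the $\nu_i=\langle x,\mu_i\rangle$ and nonemptiness is inherited from Theorem~\ref{Prop 3.2}; I would remark on this briefly rather than belabor it.

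The only point requiring any care is making sure the extension used here is genuinely two-sided: Theorem~\ref{Prop 3.2} as stated gives a fuzzy \emph{right} $h$-ideal from a fuzzy right $h$-ideal, and the symmetric argument gives the left-sided version, so a fuzzy $h$-ideal $\mu_i$ (being both) yields $\langle x,\mu_i\rangle$ that is both, hence a fuzzy $h$-ideal. I expect no genuine obstacle; the proof is a routine assembly of the intersection-commutation identity, the extension theorem, and the elementary stability of fuzzy $h$-ideals under arbitrary intersection.
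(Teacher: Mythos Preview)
The paper states this proposition without proof. Given its placement immediately after Theorem~\ref{Prop 3.2} and the pattern of the later propositions (for $\mu^{+}$, $\mu_{\beta,\alpha}$, $\mu\times\nu$, each proved by ``$X$ is a fuzzy $h$-ideal by a known result, hence $\langle x,X\rangle$ is a fuzzy $h$-ideal by Theorem~\ref{Prop 3.2}''), the intended argument is the one-line route: the intersection $\bigcap_i\mu_i$ of fuzzy $h$-ideals is a fuzzy $h$-ideal, and then Theorem~\ref{Prop 3.2} (together with the accompanying Note) applied to $\bigcap_i\mu_i$ gives the conclusion directly.

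Your approach is correct but takes a detour: you first prove the commutation identity $\langle x,\bigcap_i\mu_i\rangle=\bigcap_i\langle x,\mu_i\rangle$, then apply Theorem~\ref{Prop 3.2} to each $\mu_i$ separately, and finally intersect. The commutation identity is a nice observation in its own right, but it is not needed here---you can apply the extension theorem once to the already-intersected object rather than extend each factor and re-intersect. One small caution: your remark that ``the symmetric argument gives the left-sided version'' is imprecise, since the definition $\langle x,\mu\rangle(y)=\inf_{\alpha,\gamma,s}\mu(x\alpha s\gamma y)$ is not symmetric in the roles of $x$ and $y$. The left-ideal inequality $\langle x,\mu\rangle(p\beta q)\ge\langle x,\mu\rangle(q)$ does hold, but via the set inclusion $\{x\alpha s\gamma p\beta q\}\subseteq\{x\alpha' s'\gamma' q\}$ (take $s'=s\gamma p$, $\gamma'=\beta$), not by literally mirroring the right-ideal computation; alternatively one may simply invoke the Note and assume commutativity, as the paper evidently does.
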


\begin{Definition}\cite{Dutta} Let R, S be $\Gamma$-hemirings
and f: R$\rightarrow$ S be a function. Then f is said to be a
$\Gamma$-homomorphism if\\
(i) f(a+b)=f(a)+f(b),\\ (ii) f(a$\alpha b$)=f(a)$\alpha$f(b) for
a,b$\in$ R and $\alpha\in \Gamma$, \\(iii) $f(0_{R})=0_{S}$ where
$0_{R}$ and $0_{S}$  are the zeroes of R and S respectively.
\end{Definition}
\begin{Definition}\cite{Rosenfeld} Let f be a function from a
set X to a set Y; $\mu$ be a fuzzy subset of X and $\sigma$ be a
fuzzy subset
of Y.\\
Then image of $\mu$ under f, denoted by $f(\mu)$, is a fuzzy subset
of Y defined by\\
$f(\mu)(y)=\left \{\begin{array}{l} \underset{x\in f^{-1}{(y)}}{\sup
\mu(x)} ~~~\textmd{if}~~ f^{-1}(y)\neq \phi
\\ \textmd{0} ~~~\textmd{otherwise}
\end{array} \right . $\\
The pre-image of $\sigma$ under f, symbolized by $f^{-1}(\sigma),$
is a fuzzy subset of X defined by\\
$f^{-1}(\sigma)(x)=\sigma(f(x))$ $ \forall x \in$ X.
\end{Definition}
\begin{Proposition} Let f:R$\rightarrow$S be a morphism of
$\Gamma$-hemirings.\\
(i) If $\phi$ is a fuzzy right h-ideal of S, then $<z,f^{-1}(\phi)>$ is a
fuzzy right h-ideal of R, for any z$\in$R.\\
(ii) If f is surjective morphism and $\mu$ is a fuzzy right h-ideal
of R, then $<z,f(\mu)>$ is a fuzzy right h-ideal of S, for any z$\in$S.
\end{Proposition}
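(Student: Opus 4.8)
The plan is to reduce both parts to two facts already in hand: Theorem~\ref{Prop 3.2}, which says that an extension $<x,\psi>$ of a fuzzy right h-ideal $\psi$ is again a fuzzy right h-ideal, and the standard fact (which I would invoke or quickly verify) that the pre-image $f^{-1}(\phi)$ of a fuzzy right h-ideal $\phi$ under a $\Gamma$-homomorphism is a fuzzy right h-ideal, and that the image $f(\mu)$ of a fuzzy right h-ideal $\mu$ under a surjective $\Gamma$-homomorphism is a fuzzy right h-ideal. Granting these, part (i) is immediate: $f^{-1}(\phi)$ is a fuzzy right h-ideal of $R$, so by Theorem~\ref{Prop 3.2} applied in $R$ with the element $z\in R$, the extension $<z,f^{-1}(\phi)>$ is a fuzzy right h-ideal of $R$. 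Likewise part (ii): when $f$ is surjective, $f(\mu)$ is a fuzzy right h-ideal of $S$, and Theorem~\ref{Prop 3.2} applied in $S$ with $z\in S$ gives that $<z,f(\mu)>$ is a fuzzy right h-ideal of $S$.

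So the only real content is establishing the two transfer lemmas for pre-image and image. For the pre-image, I would check the three defining conditions of a fuzzy right h-ideal directly from $f^{-1}(\phi)(a)=\phi(f(a))$: additivity follows from $f(a+b)=f(a)+f(b)$ and the corresponding inequality for $\phi$; the absorption condition $f^{-1}(\phi)(a\gamma b)\ge f^{-1}(\phi)(a)$ follows from $f(a\gamma b)=f(a)\gamma f(b)$ and the absorption property of $\phi$; and the h-condition follows because $x+a+z=b+z$ in $R$ maps to $f(x)+f(a)+f(z)=f(b)+f(z)$ in $S$, to which the h-property of $\phi$ applies. For the image, surjectivity guarantees $f(\mu)$ is nonempty wherever needed, and one argues with suprema over fibres: given $y_1,y_2\in S$ pick $x_i\in f^{-1}(y_i)$ with $\mu(x_i)$ close to the supremum, note $x_1+x_2\in f^{-1}(y_1+y_2)$, etc.; the h-condition for $f(\mu)$ is the one place a short argument is needed, using that if $y+u+w=v+w$ in $S$ one can lift $u,v$ to preimages and then, by surjectivity, lift $y$ and $w$ as well so that the relation holds in $R$ up to the h-relation — this is the classical lemma on homomorphic images of fuzzy h-ideals and I would cite it if available, otherwise spell it out.

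The step I expect to be the main obstacle is the h-condition in the image case (part (ii)): unlike additivity and absorption, it is not a pointwise identity but involves the existential ``$x+a+z=b+z$'' pattern, and transporting it across a surjection requires choosing preimages compatibly so that the witnessing equation can be reconstructed in $R$; one must be careful that the $z$ appearing in the definition of a fuzzy h-ideal can be chosen in the fibre over the given element of $S$, which is exactly where surjectivity is used. Everything else is a routine manipulation of infima and suprema together with the already-proved Theorem~\ref{Prop 3.2}.
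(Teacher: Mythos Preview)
Your proposal is correct and follows exactly the paper's approach: the paper reduces each part to the transfer lemma that $f^{-1}(\phi)$ (respectively $f(\mu)$ for surjective $f$) is a fuzzy right h-ideal, citing Proposition~17 of \cite{SD1}, and then applies Theorem~\ref{Prop 3.2}. Your additional sketch of the transfer lemmas and your flagging of the h-condition in the image case go beyond what the paper records, since it simply invokes the external citation without further comment.
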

\begin{proof} Let f:R$\rightarrow$S be a morphism of
$\Gamma$-hemirings.\\
(i) Let $\phi$ be a fuzzy right h-ideal of S. Then by Proposition 17 of \cite{SD1}, we have $f^{-1}(\phi)$ is a fuzzy
right h-ideal of R. Now $<z,f^{-1}(\phi)>$ is an extension of $f^{-1}(\phi)$ in R. So, applying Theorem \ref{Prop 3.2} we
obtain that $<z,f^{-1}(\phi)>$ is a fuzzy right h-ideal of R.\\
(ii) Since f is surjective morphism and $\mu$ is a fuzzy right h-ideal
of R, by Proposition 17 of \cite{SD1}, we have $f(\mu)$ is a fuzzy right h-ideal of S. Hence with the help of Theorem
\ref{Prop 3.2} we get that $<z,f(\mu)>$ is a fuzzy right h-ideal of S, for any z$\in$S.
\end{proof}
\begin{Proposition} Let $\mu $ be a fuzzy h-ideal of S and $x \in S$.
Then the following conditions hold
\\(i) $\mu \subseteq <x,\mu>$,
\\(ii) $<(x \gamma)^{n-1}x,\mu> \subseteq <(x \gamma)^{n}x,\mu>$
where $\gamma \in \Gamma$,
\\(iii) If $\mu(x)>0$ then $supp~ <x,\mu>=S$ where $supp ~\mu$ is
defined by \\~~~~~~~$supp~ \mu=\{s \in S:\mu(s)>0\}$.\end{Proposition}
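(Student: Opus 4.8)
The plan is to verify each of the three inclusions pointwise, relying only on the defining inequalities of a fuzzy left/right $h$-ideal and on the associativity condition (iv) in the definition of a $\Gamma$-hemiring; the additive $h$-condition plays no role here. For (i), fix $y\in S$. For every $s\in S$ and $\alpha,\gamma\in\Gamma$ we may regroup $x\alpha s\gamma y=(x\alpha s)\gamma y$, so the fuzzy left $h$-ideal inequality $\mu(p\gamma q)\geq\mu(q)$ (taken with $p=x\alpha s$, $q=y$) gives $\mu(x\alpha s\gamma y)\geq\mu(y)$. Passing to the infimum over all $s,\alpha,\gamma$ yields $<x,\mu>(y)\geq\mu(y)$, i.e. $\mu\subseteq<x,\mu>$.

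For (ii) I would first isolate an auxiliary monotonicity statement: for all $a,c\in S$ and $\gamma\in\Gamma$ one has $<a,\mu>\subseteq<a\gamma c,\mu>$. To prove it, fix $y\in S$; for any $s\in S$ and $\alpha,\beta\in\Gamma$ associativity gives $(a\gamma c)\alpha s\beta y=a\gamma(c\alpha s)\beta y$, and since $c\alpha s\in S$ this is an element of the form $a\gamma t\beta y$ with $t\in S$, so $\mu\bigl((a\gamma c)\alpha s\beta y\bigr)\geq\inf_{t\in S,\ \delta,\varepsilon\in\Gamma}\mu(a\delta t\varepsilon y)=<a,\mu>(y)$. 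Taking the infimum over $s,\alpha,\beta$ gives $<a\gamma c,\mu>(y)\geq<a,\mu>(y)$. Part (ii) then follows by choosing $a=(x\gamma)^{n-1}x$ and $c=x$ with the same $\gamma$, since $(x\gamma)^{n-1}x\,\gamma\,x=(x\gamma)^{n}x$. (One could instead phrase this as an induction on $n$, but a single application of the auxiliary statement suffices.)

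For (iii), fix $y\in S$. Regrouping the product the other way, $x\alpha s\gamma y=x\alpha(s\gamma y)$, so the fuzzy right $h$-ideal inequality $\mu(p\gamma q)\geq\mu(p)$ (taken with $p=x$, $q=s\gamma y$) gives $\mu(x\alpha s\gamma y)\geq\mu(x)$ for all $s,\alpha,\gamma$. Hence $<x,\mu>(y)=\inf_{s,\alpha,\gamma}\mu(x\alpha s\gamma y)\geq\mu(x)>0$, so $y\in supp\,<x,\mu>$; since $y\in S$ was arbitrary, $supp\,<x,\mu>=S$.

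I do not expect a genuine obstacle here: each item reduces to a one-line estimate once $x\alpha s\gamma y$ is regrouped suitably. The only mildly delicate point is the bookkeeping in (ii) — recognising $(x\gamma)^{n}x$ as $(x\gamma)^{n-1}x$ multiplied on the right by $x$ through $\gamma$ — which is exactly why I would extract the general inclusion $<a,\mu>\subseteq<a\gamma c,\mu>$ instead of manipulating the iterated products directly.
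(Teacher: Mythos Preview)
Your proof is correct. Parts (i) and (iii) are essentially identical to the paper's argument: regroup $x\alpha s\gamma y$ as $(x\alpha s)\gamma y$ and use the left-ideal inequality for (i), regroup it as $x\alpha(s\gamma y)$ and use the right-ideal inequality for (iii).

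For (ii) you take a genuinely different route. The paper writes $(x\gamma)^{n}x = (x\gamma)\bigl((x\gamma)^{n-1}x\bigr)$, peeling off the \emph{leftmost} factor $x\gamma$, and then invokes the left-ideal inequality $\mu((x\gamma)z)\geq\mu(z)$ to obtain $\mu\bigl((x\gamma)^{n}x\,\alpha s\beta y\bigr)\geq\mu\bigl((x\gamma)^{n-1}x\,\alpha s\beta y\bigr)$ directly, with the same $s,\alpha,\beta$ on both sides. You instead write $(x\gamma)^{n}x = \bigl((x\gamma)^{n-1}x\bigr)\gamma x$, peeling off the \emph{rightmost} factor, and absorb it into the middle variable via associativity, so that $\bigl((x\gamma)^{n-1}x\,\gamma x\bigr)\alpha s\beta y = (x\gamma)^{n-1}x\,\gamma\,(x\alpha s)\,\beta y$ is already one of the terms over which $<(x\gamma)^{n-1}x,\mu>(y)$ takes its infimum. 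Your auxiliary inclusion $<a,\mu>\subseteq<a\gamma c,\mu>$ is thus proved from associativity alone and holds for \emph{any} fuzzy subset $\mu$, whereas the paper's step needs $\mu$ to be a fuzzy left $h$-ideal. Both are one-line estimates once the regrouping is chosen; your version is marginally more general and makes the monotonicity in the ``base point'' explicit, while the paper's version avoids introducing an auxiliary statement.
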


\begin{proof} (i) Let $y \in S$. Now
$<x,\mu>(y)=\underset{s\in S}{\underset{\alpha,\gamma\in\Gamma} {\inf}} \mu(x \alpha s\gamma  y) \geq \mu(y)$. Thus
$\mu \subseteq <x,\mu>$.
\\(ii) Let n be a positive integer and $y \in S$. Then
\\
$$
\begin{array}{ll}<(x \gamma)^{n}x,\mu>(y)&=\underset{s\in S}{\underset{\alpha,\beta\in\Gamma} {\inf}} \mu((x \gamma)^{n}x \alpha s\beta y)\\
& \geq
\underset{s\in S}{\underset{\alpha,\beta\in\Gamma} {\inf}} \mu((x \gamma )(x \gamma)^{n-1}x\alpha s \beta y) \\
& \geq \underset{s\in S}{\underset{\alpha,\beta\in\Gamma} {\inf}} \mu((x \gamma)^{n-1}x \alpha s\beta y)\\
&=<(x
\gamma)^{n-1}x,\mu> (y).
\end{array}
$$
So, $<(x \gamma)^{n-1}x,\mu> \subseteq
<(x \gamma)^{n}x,\mu>$.
\\(iii) Let $\mu(x)>0$ and $y \in S$. Then $<x,\mu>(y)=\underset{s\in S}{\underset{\alpha,\gamma\in\Gamma} {\inf}} \mu(x \alpha s\gamma y) \geq \mu(x)$.
Thus $y
\in supp <x,\mu>$ and consequently, $S \subseteq supp <x,\mu>$.
Hence $S =supp <x,\mu>$.
\end{proof}

\begin{Proposition} If $\mu$ is a fuzzy h-bi-ideal of S then its extension by x$\in$S, $<x,\mu>$ is also a fuzzy
h-bi-ideal of S, provided S is commutative.
\end{Proposition}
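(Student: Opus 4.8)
The plan is to verify directly that the extension $<x,\mu>$ satisfies the defining clauses of a fuzzy h-bi-ideal. By definition a fuzzy h-bi-ideal of $S$ is in particular a fuzzy h-ideal of $S$, and $S$ is assumed commutative, so most of the work is already done for us: the Note following Theorem \ref{Prop 3.2} gives that $<x,\mu>$ is a fuzzy h-ideal of $S$, which supplies the additive inequality $<x,\mu>(p+q)\geq\min\{<x,\mu>(p),<x,\mu>(q)\}$, the $h$-property, and the one-sided multiplicative inequalities. What is left is the genuine bi-ideal condition
$$<x,\mu>(p\alpha q\beta r)\geq\min\{<x,\mu>(p),<x,\mu>(r)\}\qquad\text{for all }p,q,r\in S,\ \alpha,\beta\in\Gamma,$$
and that is where the real content sits.

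To establish it I would unfold the definition of the extension and use associativity to reshape the argument of $\mu$. Concretely,
$$<x,\mu>(p\alpha q\beta r)=\underset{s\in S}{\underset{\gamma,\delta\in\Gamma}{\inf}}\ \mu\big(x\gamma s\delta p\alpha q\beta r\big)=\underset{s\in S}{\underset{\gamma,\delta\in\Gamma}{\inf}}\ \mu\big(x\gamma(s\delta p\alpha q)\beta r\big),$$
and the key remark is that $s\delta p\alpha q$ is an element of $S$, so $x\gamma(s\delta p\alpha q)\beta r$ is exactly of the form $x\gamma s'\beta r$ appearing in the infimum that defines $<x,\mu>(r)$; hence $\mu\big(x\gamma(s\delta p\alpha q)\beta r\big)\geq<x,\mu>(r)$ for every choice of $s,\gamma,\delta$, and taking the infimum gives $<x,\mu>(p\alpha q\beta r)\geq<x,\mu>(r)\geq\min\{<x,\mu>(p),<x,\mu>(r)\}$. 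Commutativity of $S$ then also lets one rewrite $x\gamma s\delta p\alpha q\beta r=x\gamma(s\delta q\alpha r)\beta p$ and conclude symmetrically that $<x,\mu>(p\alpha q\beta r)\geq<x,\mu>(p)$, which is more than required.

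The step I expect to be the main obstacle is a tempting wrong turn rather than a technical difficulty: the instinct is to apply the bi-ideal inequality of $\mu$ itself to $\mu\big((x\gamma s\delta p)\alpha q\beta r\big)$, which yields $\min\{\mu(x\gamma s\delta p),\mu(r)\}$, or — after using commutativity to free $p$ — something like $\min\{\mu(p),\mu(r)\}$. Since $\mu\subseteq<x,\mu>$ (the first part of the preceding Proposition), these lower bounds run in the wrong direction and cannot be upgraded to bounds in terms of $<x,\mu>$. The correct device, as above, is to regard the entire factor containing $x$ as a single element of $S$, so that the expression is already bounded below by an extension value; commutativity is used only to invoke the Note (for the ambient h-ideal structure) and, optionally, to symmetrize the bi-ideal inequality in $p$ and $r$. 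Once the displayed inequality is secured, $<x,\mu>$ meets every clause of the definition and the proof is finished.
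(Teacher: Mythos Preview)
Your argument is correct and follows the same overall plan as the paper: reduce to the single bi-ideal inequality $<x,\mu>(p\alpha q\beta r)\geq\min\{<x,\mu>(p),<x,\mu>(r)\}$ and verify it by unfolding the infimum. The one genuine difference is in the device used at the crux. The paper invokes the right $h$-ideal property of $\mu$ itself, writing $\mu(x\eta s\gamma p\alpha q\beta r)\geq\mu(x\eta s\gamma p)$ and then taking the infimum to obtain $<x,\mu>(p)$ directly (commutativity is used only for the companion bound by $<x,\mu>(r)$). You instead regroup $x\gamma s\delta p\alpha q\beta r=x\gamma s'\beta r$ with $s'=s\delta p\alpha q$ and bound by the very infimum defining $<x,\mu>(r)$, never appealing to the $h$-ideal inequality of $\mu$ at that step. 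Your route is slightly more self-contained for the bi-ideal clause (it would work even if $\mu$ were not a right $h$-ideal), while the paper's route makes the role of $\mu$'s ideal property explicit; in both cases commutativity is what lets one obtain the remaining bound, and your observation about the ``wrong turn'' via $\min\{\mu(x\gamma s\delta p),\mu(r)\}$ is accurate.
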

\begin{proof} Let $\mu$ is a fuzzy h-bi-ideal of S and its extension by x$\in$S is $<x,\mu>$. Since $\mu$ be a fuzzy h-bi-ideal it is sufficient to prove $<x,\mu>(p\alpha q \beta r)\geq\min\{<x,\mu>(p),<x,\mu>(r)\}$ for all p,q,r$\in$S and
$\alpha,\beta\in \Gamma$.\\
 Suppose p,q,r$\in$S and
$\alpha,\beta\in \Gamma$. Now\\
$<x,\mu>(p\alpha q \beta r)=\underset{s\in S}{\underset{\eta,\gamma\in\Gamma} {\inf}}\mu(x\eta s \gamma p\alpha q \beta r)\geq
\underset{s\in S}{\underset{\eta,\gamma\in\Gamma} {\inf}}\mu(x \eta s\gamma p)=<x,\mu>(p)$\\
Also, $<x,\mu>(p\alpha q \beta r)=\underset{s\in S}{\underset{\eta,\gamma\in\Gamma} {\inf}}\mu(x\eta s \gamma p\alpha q \beta r)\geq
\underset{s\in S}{\underset{\eta,\gamma\in\Gamma} {\inf}}\mu(x\eta s\gamma r)=<x,\mu>(r)$(since S is commutative).\\
Therefore $<x,\mu>(p\alpha q \beta r)\geq\min\{<x,\mu>(p),<x,\mu>(r)\}$. So, $<x,\mu>$ is a fuzzy h-bi-ideal of S.
\end{proof}
\begin{Proposition} If $\mu$ is a fuzzy h-interior-ideal of S then its extension by x$\in$S, $<x,\mu>$ is also a fuzzy
h-interior-ideal of S, provided S is commutative.
\end{Proposition}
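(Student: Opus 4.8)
The plan is to imitate the proof of the preceding Proposition on fuzzy h-bi-ideals. Since $\mu$ is, in particular, a fuzzy h-ideal of the commutative $\Gamma$-hemiring $S$, the Note following Theorem \ref{Prop 3.2} already tells us that $<x,\mu>$ is a fuzzy h-ideal of $S$. So it suffices to verify the interior inequality, namely that $<x,\mu>(p\alpha q\beta r)\geq <x,\mu>(q)$ for all $p,q,r\in S$ and all $\alpha,\beta\in\Gamma$.

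First I would fix $p,q,r\in S$ and $\alpha,\beta\in\Gamma$ and expand the definition: $<x,\mu>(p\alpha q\beta r)=\inf_{s\in S,\,\eta,\gamma\in\Gamma}\mu(x\eta s\gamma p\alpha q\beta r)$. The crucial step is a rearrangement of the argument: using commutativity of $S$ to write $q\beta r=r\beta q$, and then the associativity axiom of the $\Gamma$-multiplication repeatedly, one gets $x\eta s\gamma p\alpha q\beta r = x\eta t\beta q$, where $t:=s\gamma p\alpha r\in S$. Since this element has exactly the shape $x\eta t\beta q$ occurring in the infimum that defines $<x,\mu>(q)$, we obtain $\mu(x\eta s\gamma p\alpha q\beta r)=\mu(x\eta t\beta q)\geq <x,\mu>(q)$ for every choice of $s\in S$ and $\eta,\gamma\in\Gamma$. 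Taking the infimum over $s,\eta,\gamma$ on the left then yields $<x,\mu>(p\alpha q\beta r)\geq <x,\mu>(q)$, as required.

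Putting the two observations together, $<x,\mu>$ is a fuzzy h-ideal of $S$ satisfying the interior inequality, hence a fuzzy h-interior-ideal of $S$. The only genuine use of commutativity — and the one place where care is needed — is the rearrangement $x\eta s\gamma p\alpha q\beta r = x\eta t\beta q$: commutativity is precisely what lets $q$ slide past $p$ and $r$ so that the composite lies again in the family of elements over which the infimum defining $<x,\mu>(q)$ runs. Without it one could only invoke the interior property of $\mu$ itself to get the weaker bound $\mu(x\eta s\gamma p\alpha q\beta r)\geq\mu(q)$, which would give merely $<x,\mu>(p\alpha q\beta r)\geq\mu(q)$ rather than the stronger $\geq <x,\mu>(q)$. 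Everything else is routine bookkeeping already carried out in the earlier results.
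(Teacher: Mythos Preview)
Your proposal is correct and follows essentially the same route as the paper: reduce to the interior inequality, rearrange via commutativity so that $q$ appears in the right position, and compare with the infimum defining $<x,\mu>(q)$. The only cosmetic difference is in the rearrangement: the paper swaps $p$ and $q$ to obtain $\mu(x\eta s\gamma q\alpha p\beta r)\geq\mu(x\eta s\gamma q)$ via the right $h$-ideal property of $\mu$, whereas you swap $q$ and $r$ to get directly an element $x\eta t\beta q$ lying in the family over which the infimum is taken; both arguments are equally valid and equally short.
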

\begin{proof} Let $\mu$ be a fuzzy h-interior-ideal of S and its extension by x$\in$S is $<x,\mu>$. Then it is sufficient to prove $<x,\mu>(p\alpha q \beta r)\geq <x,\mu>(q)$ for all p,q,r$\in$S and $\alpha,\beta\in \Gamma$.\\
Suppose p,q,r$\in$S and $\alpha,\beta\in \Gamma$. So,
$$
\begin{array}{ll}<x,\mu>(p\alpha q \beta r)&=\underset{s\in S}{\underset{\eta,\gamma\in\Gamma} {\inf}}\mu(x\eta s \gamma p\alpha q \beta r)=
\underset{s\in S}{\underset{\eta,\gamma\in\Gamma} {\inf}}\mu(x\eta s \gamma q\alpha p \beta r)\\
&\geq\underset{s\in S}{\underset{\eta,\gamma\in\Gamma} {\inf}}\mu(x\eta s \gamma q) =<x,\mu>(q)
\end{array}
$$
Hence $<x,\mu>$ is a fuzzy h-interior-ideal of S.
\end{proof}
\begin{Proposition}\label{ext-quasi} If $\mu$ is a fuzzy h-quasi-ideal of S then its extension by x$\in$S, $<x,\mu>$ is
also a fuzzy h-quasi-ideal of S.
\end{Proposition}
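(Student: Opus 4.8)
The plan is to verify, for $\nu = <x,\mu>$, the three defining conditions of a fuzzy h-quasi-ideal: additivity $\nu(p+q)\ge\min\{\nu(p),\nu(q)\}$; the h-condition $p+a+z=b+z\Rightarrow\nu(p)\ge\min\{\nu(a),\nu(b)\}$; and $(\nu o_{h}\chi_{S})\cap(\chi_{S} o_{h}\nu)\subseteq\nu$. The first two are immediate: the additivity computation and the h-condition computation in the proof of Theorem~\ref{Prop 3.2} use nothing about $\mu$ beyond its own additivity and its own h-condition, both of which a fuzzy h-quasi-ideal possesses, so they carry over word for word. (The one step in that proof that genuinely used the right h-ideal structure of $\mu$ was the multiplicative inequality $\nu(p\beta q)\ge\nu(p)$, which we do not need here.)

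The real content is the product condition, and the efficient route is to first establish that $\nu$ satisfies the \emph{left} multiplicative inequality
$$\nu(p\beta q)\ \ge\ \nu(q)\qquad(p,q\in S,\ \beta\in\Gamma),$$
so that $\nu$ is a fuzzy left h-ideal; the product condition will then follow because a fuzzy left h-ideal is automatically a fuzzy h-quasi-ideal. To prove the displayed inequality I would invoke the associativity axiom~(iv): for all $s\in S$ and $\alpha,\gamma\in\Gamma$ we have $x\alpha s\gamma p\beta q = x\alpha(s\gamma p)\beta q$, and as $s$ ranges over $S$ and $\gamma$ over $\Gamma$, the element $s\gamma p$ ranges over the subset $S\Gamma p$ of $S$. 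Hence the family $\{\mu(x\alpha s\gamma p\beta q):s\in S,\ \alpha,\gamma\in\Gamma\}$ whose infimum is $\nu(p\beta q)$ is a subfamily of $\{\mu(x\alpha' s'\gamma' q):s'\in S,\ \alpha',\gamma'\in\Gamma\}$, whose infimum is $\nu(q)$; shrinking the index family cannot decrease the infimum, which gives $\nu(p\beta q)\ge\nu(q)$. Note that, in contrast with the bi-ideal and interior-ideal cases, this argument uses no commutativity of $S$.

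Granting additivity, the h-condition, and the left multiplicative inequality for $\nu$, I would finish by showing $\chi_{S} o_{h}\nu\subseteq\nu$, which already yields the product condition since $(\nu o_{h}\chi_{S})\cap(\chi_{S} o_{h}\nu)\subseteq\chi_{S} o_{h}\nu$. Fix $y\in S$ and a representation $y+\sum_{i=1}^{n}a_{i}\gamma_{i}b_{i}+z=\sum_{i=1}^{n}c_{i}\delta_{i}d_{i}+z$. Iterating additivity and then applying the left inequality summand-by-summand gives $\nu\!\left(\sum_{i}a_{i}\gamma_{i}b_{i}\right)\ge\min_{i}\nu(a_{i}\gamma_{i}b_{i})\ge\min_{i}\nu(b_{i})$, and similarly $\nu\!\left(\sum_{i}c_{i}\delta_{i}d_{i}\right)\ge\min_{i}\nu(d_{i})$; applying the h-condition to the displayed equation then yields $\nu(y)\ge\min\{\min_{i}\nu(b_{i}),\min_{i}\nu(d_{i})\}$. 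Since this holds for every such representation (and trivially when none exists), $\nu(y)\ge(\chi_{S} o_{h}\nu)(y)$, which completes the proof.

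The one step demanding care is the associativity bookkeeping in the left multiplicative inequality — verifying that $x\alpha s\gamma p\beta q$ does re-associate as $x\alpha(s\gamma p)\beta q$ and that this genuinely realizes a subfamily of the index set defining $\nu(q)$. A direct attack that manipulates the generalized h-product $\nu o_{h}\chi_{S}$ by hand is possible but substantially messier, which is why I would route through the left-ideal property. Everything else is routine; in fact the same reasoning shows, with no hypothesis on $S$, that the extension of any fuzzy left h-ideal is again a fuzzy left h-ideal, which is why the Proposition needs no commutativity assumption.
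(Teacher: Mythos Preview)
Your argument is correct and takes a genuinely different route from the paper. The paper verifies the product condition directly from the h-quasi-ideal property of $\mu$: it computes $<x,(\mu o_{h}\chi_{S})\cap(\chi_{S}o_{h}\mu)>(p)$, applies $(\mu o_{h}\chi_{S})\cap(\chi_{S}o_{h}\mu)\subseteq\mu$ pointwise at $x\alpha s\gamma p$, and bounds the result by $<x,\mu>(p)$. You instead bypass the quasi-ideal hypothesis on $\mu$ almost entirely, proving the stronger fact that $\nu=<x,\mu>$ is a fuzzy \emph{left} h-ideal via the associativity/subfamily observation $x\alpha s\gamma(p\beta q)=x\alpha(s\gamma p)\beta q$, and then deducing the product condition from $\chi_{S}o_{h}\nu\subseteq\nu$. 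Your route yields more --- the extension of any fuzzy subset satisfying additivity and the h-condition is already a fuzzy left h-ideal, hence an h-quasi-ideal --- and it is also more robust: the paper's displayed inequality bounds $<x,(\mu o_{h}\chi_{S})\cap(\chi_{S}o_{h}\mu)>$, whereas what the definition literally demands is a bound on $(<x,\mu> o_{h}\chi_{S})\cap(\chi_{S}o_{h}<x,\mu>)$, and the identification of these two quantities is never justified there; your argument sidesteps that issue.
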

\begin{proof} Let $\mu$ be a fuzzy h-quasi ideal of S and its extension by x$\in$S is $<x,\mu>$. Suppose p, a,b,z$\in$S.
Then
$$
\begin{array}{ll}
<x,(\mu o_{h}\chi_{S})\cap(\chi_{S}o_{h}\mu)>(p)&=\underset{s\in S}{\underset{\alpha,\gamma\in\Gamma} {\inf}}((\mu
o_{h}\chi_{S})\cap(\chi_{S}o_{h}\mu))(x\alpha s \gamma p)\\
&=\underset{s\in S}{\underset{\alpha,\gamma\in\Gamma} {\inf}}\min\{(\mu o_{h}\chi_{S})(x\alpha s \gamma p),(\chi_{S}o_{h}\mu)(x\alpha s \gamma p) \}\\
&\leq \underset{~~~~~~~since~\mu~ is ~a~fuzzy~h-quasi-ideal}{\underset{s\in S}{\underset{\alpha,\gamma\in\Gamma} {\inf}}\min\{\mu(x\alpha s \gamma p),\mu(x\alpha s\gamma
p)\}}\\
&=\underset{s\in S}{\underset{\alpha,\gamma\in\Gamma} {\inf}}\mu(x\alpha s\gamma p)
=<x,\mu>(p)
\end{array}
$$
 Also from Theorem \ref{Prop 3.2} we have $<x,\mu>(p+q)\geq\min\{\mu(p),\mu(q)\}$ and p+a+z=q+z implies $<x,\mu>(p)\geq \min\{<x,\mu>(a),<x,\mu>(b)\}.$\\
Hence $<x,\mu>$ is a fuzzy h-quasi ideal of S.
\end{proof}
\begin{Remark} We know that if $\mu$ is fuzzy h-quasi-ideal of a $\Gamma$-hemiring S it is also a fuzzy h-bi-ideal. In
previous proposition \ref{ext-quasi} we show that its extension by any element x$\in$S, $<x,\mu>$ is a fuzzy
h-quasi-ideal also. Now it is a routine verification to show that $<x,\mu>$ is also a fuzzy h-bi-ideal of S, provided S is commutative.
\end{Remark}

\begin{Proposition} Let $\mu$ be a fuzzy h-ideal of S. Then for any x$\in$S, $<x,\mu^{+}>$ is also a fuzzy h-ideal of
S, where $\mu^{+}$ is defined by $\mu^{+}(x)=\mu(x)-\mu(0)+1$.
\end{Proposition}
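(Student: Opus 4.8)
The plan is to reduce the statement to the extension theorem already in hand by showing that $\mu^{+}$ is itself a fuzzy $h$-ideal of $S$. First I would record the well-definedness of $\mu^{+}$ as a fuzzy subset: for a fuzzy $h$-ideal $\mu$ one has $\mu(0)\geq\mu(y)$ for every $y\in S$, since $0=0\gamma y$ (Definition 2.1(v)) and the left-ideal condition gives $\mu(0)=\mu(0\gamma y)\geq\mu(y)$. Hence $\mu^{+}(y)=\mu(y)-\mu(0)+1\leq 1$, and $\mu^{+}(y)\geq 0$ because $\mu(y)\geq 0$ and $\mu(0)\leq 1$; moreover $\mu^{+}(0)=1\neq 0$, so $\mu^{+}$ is a non-empty fuzzy subset of $S$.

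Next I would verify the three defining conditions of a fuzzy $h$-ideal for $\mu^{+}$. The point is that the affine map $t\mapsto t-\mu(0)+1$ is increasing and interchanges with $\min$, i.e. $\min\{a,b\}-c+1=\min\{a-c+1,\,b-c+1\}$, so every inequality valid for $\mu$ transfers verbatim. Thus $\mu^{+}(p+q)=\mu(p+q)-\mu(0)+1\geq\min\{\mu(p),\mu(q)\}-\mu(0)+1=\min\{\mu^{+}(p),\mu^{+}(q)\}$; similarly $\mu^{+}(p\gamma q)\geq\mu^{+}(p)$ and $\mu^{+}(p\gamma q)\geq\mu^{+}(q)$ for all $p,q\in S,\gamma\in\Gamma$; and from $p+a+z=b+z$ the inequality $\mu(p)\geq\min\{\mu(a),\mu(b)\}$ gives $\mu^{+}(p)\geq\min\{\mu^{+}(a),\mu^{+}(b)\}$. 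Hence $\mu^{+}$ is a fuzzy $h$-ideal of $S$.

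Finally I would apply Theorem \ref{Prop 3.2} to the fuzzy right $h$-ideal $\mu^{+}$ to conclude that $<x,\mu^{+}>$ is a fuzzy right $h$-ideal of $S$, and then dispatch the left-ideal half. That half is the exact mirror of the computation in Theorem \ref{Prop 3.2}: to see $<x,\mu^{+}>(p\beta q)\geq <x,\mu^{+}>(q)$ one rewrites $x\alpha s\gamma p\beta q=x\alpha(s\gamma p)\beta q$, observes that this is of the form $x\alpha' s'\gamma' q$ with $s'=s\gamma p\in S$, so $\mu^{+}(x\alpha s\gamma p\beta q)$ is bounded below by the infimum defining $<x,\mu^{+}>(q)$, and then takes the infimum over $s\in S,\alpha,\gamma\in\Gamma$. (In the convention of the Note following Theorem \ref{Prop 3.2}, if $S$ is assumed commutative this step is immediate.) Combining the two halves yields that $<x,\mu^{+}>$ is a fuzzy $h$-ideal of $S$.

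I do not expect a genuine obstacle here: the only point demanding a little care is supplying the left-ideal half of the extension, since Theorem \ref{Prop 3.2} is phrased only for right $h$-ideals; but this is the routine associativity-reindexing argument sketched above, and the rest is bookkeeping about the shift by $\mu(0)-1$.
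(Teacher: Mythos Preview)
Your approach is essentially the paper's: the paper simply cites Proposition~25 of \cite{SD1} to assert that $\mu^{+}$ is a fuzzy $h$-ideal (which you verify directly via the affine-shift argument) and then invokes Theorem~\ref{Prop 3.2}. You are in fact more careful than the paper, which glosses over the left-ideal half of the extension that you supply explicitly via the associativity reindexing.
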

\begin{proof} Since $\mu$ is a fuzzy h-ideal of S, by Proposition 25 of \cite{SD1} we have $\mu^{+}$ is also a fuzzy
h-ideal and hence by using Theorem \ref{Prop 3.2} we deduce that $<x,\mu^{+}>$ is also a fuzzy h-ideal of S.
\end{proof}
\begin{Proposition} If $\mu$ is a fuzzy h-ideal of S, then for any x$\in$ S, $<x,\mu_{\beta,\alpha}>$ is also a fuzzy
h-ideal of S, where $\mu_{\beta,\alpha}(y)=\beta.\mu(y)+\alpha$, $\beta\in(0,1]$ and $\alpha\in[0,1-\sup\{\mu(y): y\in
S \}]$.
\end{Proposition}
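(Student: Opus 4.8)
The plan is to mimic the proof of the preceding proposition on $<x,\mu^{+}>$: first establish that $\mu_{\beta,\alpha}$ is itself a fuzzy h-ideal of $S$, and then apply Theorem \ref{Prop 3.2} (together with the Note following it) to conclude that its extension $<x,\mu_{\beta,\alpha}>$ is a fuzzy h-ideal of $S$ for every $x\in S$.

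For the first step I would verify the three defining conditions of a fuzzy h-ideal directly for $\mu_{\beta,\alpha}(y)=\beta\mu(y)+\alpha$. The crucial observation is that, because $\beta>0$, the affine map $t\mapsto\beta t+\alpha$ is increasing and commutes with $\min$, i.e. $\beta\min\{s,t\}+\alpha=\min\{\beta s+\alpha,\beta t+\alpha\}$. Thus $\mu(p+q)\geq\min\{\mu(p),\mu(q)\}$ yields $\mu_{\beta,\alpha}(p+q)\geq\min\{\mu_{\beta,\alpha}(p),\mu_{\beta,\alpha}(q)\}$; the product inequalities $\mu(p\gamma q)\geq\mu(q)$ and $\mu(p\gamma q)\geq\mu(p)$ pass to $\mu_{\beta,\alpha}$ the same way; and if $p+a+z=b+z$ then $\mu(p)\geq\min\{\mu(a),\mu(b)\}$ gives $\mu_{\beta,\alpha}(p)\geq\min\{\mu_{\beta,\alpha}(a),\mu_{\beta,\alpha}(b)\}$. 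One must also check that $\mu_{\beta,\alpha}$ is a genuine non-empty fuzzy subset: since $\beta\leq 1$ and $\alpha\leq 1-\sup\{\mu(y):y\in S\}$ we have $\beta\mu(y)+\alpha\leq\sup\mu+\alpha\leq 1$, while $\beta\mu(y)+\alpha\geq 0$ is clear, and $\beta>0$ forces $\mu_{\beta,\alpha}$ to be non-constant whenever $\mu$ is. (If this computation is recorded in \cite{SD1}, as the analogous fact about $\mu^{+}$ is, one may simply cite it.)

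For the second step, $\mu_{\beta,\alpha}$ being a fuzzy h-ideal of $S$, Theorem \ref{Prop 3.2} (applied to each one-sided part, together with the Note) immediately gives that $<x,\mu_{\beta,\alpha}>$ is a fuzzy h-ideal of $S$. I do not anticipate any real difficulty here: the whole argument rests on the single fact that positive scaling followed by translation is order-preserving and $\min$-compatible, so every inequality transfers verbatim. The only point deserving attention is the range constraint on $\alpha$ and $\beta$, which is precisely what is needed to keep $\mu_{\beta,\alpha}$ valued in $[0,1]$ — this is the reason the hypotheses $\beta\in(0,1]$ and $\alpha\in[0,1-\sup\{\mu(y):y\in S\}]$ are imposed.
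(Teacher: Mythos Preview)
Your proposal is correct and matches the paper's approach exactly: the paper's proof simply cites Theorem 20 of \cite{SD1} for the fact that $\mu_{\beta,\alpha}$ is a fuzzy h-ideal (precisely the citation you anticipated), and then invokes Theorem \ref{Prop 3.2} to conclude. Your direct verification of the three axioms via the order-preserving, $\min$-compatible affine map $t\mapsto\beta t+\alpha$ is just an unpacking of that cited result, so there is no substantive difference.
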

\begin{proof} Since $\mu$ is a fuzzy h-ideal of S, by Theorem 20 of \cite{SD1} we have $\mu_{\beta,\alpha}$ is also a
fuzzy h-ideal and hence by using Theorem \ref{Prop 3.2} we deduce that $<x,\mu_{\beta,\alpha}>$ is also a fuzzy h-ideal
of S.
\end{proof}
\begin{Proposition} If $\mu$ and $\nu$ are any two fuzzy h-ideal of S, then for any x$\in$ S, $<x,\mu\times\nu>$ is also
a fuzzy h-ideal of S, where $(\mu\times\nu)(a,b)=\min\{\mu(a),\mu(b)\},~ a,b\in S$.
\end{Proposition}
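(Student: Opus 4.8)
The plan is to obtain $<x,\mu\times\nu>$ as an extension inside the \emph{direct product} $\Gamma$-hemiring $S\times S$: first make $S\times S$ into a $\Gamma$-hemiring, then show $\mu\times\nu$ is a fuzzy h-ideal of $S\times S$, and finally apply Theorem~\ref{Prop 3.2} in that setting. Note at the outset that, since $\mu\times\nu$ is a fuzzy subset of $S\times S$ rather than of $S$, the element $x$ occurring in $<x,\mu\times\nu>$ has to be taken in $S\times S$; so the statement should be read with $x\in S\times S$, and $<x,\mu\times\nu>$ is then a fuzzy h-ideal of $S\times S$.

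Next I would equip $S\times S$ with the componentwise operations $(a,b)+(c,d)=(a+c,b+d)$ and $(a,b)\alpha(c,d)=(a\alpha c,b\alpha d)$, with zero $(0,0)$. Each of the $\Gamma$-hemiring axioms for $S\times S$ reduces coordinatewise to the same axiom in $S$, so $S\times S$ is again a $\Gamma$-hemiring; this step is routine. Then I would check that $\mu\times\nu$, defined by $(\mu\times\nu)(a,b)=\min\{\mu(a),\mu(b)\}$ (read $\nu$ on the second coordinate if the intended convention is $\min\{\mu(a),\nu(b)\}$), is a fuzzy h-ideal of $S\times S$: for the additive clause, $(\mu\times\nu)\big((a,b)+(c,d)\big)=\min\{\mu(a+c),\mu(b+d)\}\ge\min\{\mu(a),\mu(c),\mu(b),\mu(d)\}=\min\{(\mu\times\nu)(a,b),(\mu\times\nu)(c,d)\}$, using that $\mu$ is a fuzzy h-ideal in each coordinate; the left and right multiplicative clauses follow the same way from $\mu(a\alpha c)\ge\mu(c)$ and $\mu(a\alpha c)\ge\mu(a)$; and for the h-condition, if $(x_1,x_2)+(a_1,a_2)+(z_1,z_2)=(b_1,b_2)+(z_1,z_2)$ then $x_i+a_i+z_i=b_i+z_i$ in $S$ for $i=1,2$, so $\mu(x_i)\ge\min\{\mu(a_i),\mu(b_i)\}$, and taking the minimum over $i$ gives the required inequality for $\mu\times\nu$.

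With $\mu\times\nu$ established as a fuzzy h-ideal of $S\times S$, I would finish by applying Theorem~\ref{Prop 3.2} to the $\Gamma$-hemiring $S\times S$ and the fuzzy h-ideal $\mu\times\nu$, which gives that $<x,\mu\times\nu>$ is a fuzzy right h-ideal of $S\times S$; the left-sided condition is handled by the same computation as in Theorem~\ref{Prop 3.2}, using $x\alpha s\gamma p\beta q=x\alpha(s\gamma p)\beta q$ to absorb the extra factor into the middle variable (this is the argument underlying the Note following Theorem~\ref{Prop 3.2}). Hence $<x,\mu\times\nu>$ is a fuzzy h-ideal of $S\times S$ for every $x\in S\times S$, which is the assertion.

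The only part that is not a verbatim transcription of computations already made (in Theorem~\ref{Prop 3.2} and the preceding propositions) is the verification that $\mu\times\nu$ satisfies the h-condition on the product $\Gamma$-hemiring, together with the preliminary bookkeeping of identifying the correct ambient hemiring; I expect that to be the main, though still straightforward, obstacle.
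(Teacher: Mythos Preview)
Your approach is correct and coincides with the paper's: the paper simply cites Theorem~35 of \cite{SD1} for the fact that $\mu\times\nu$ is a fuzzy h-ideal (of the product $\Gamma$-hemiring $S\times S$) and then applies Theorem~\ref{Prop 3.2}. Your version is more careful in making the ambient hemiring $S\times S$ explicit (and correspondingly reading $x\in S\times S$), in verifying the fuzzy h-ideal axioms for $\mu\times\nu$ directly rather than by citation, and in supplying the left multiplicative clause via the absorption $x\alpha s\gamma p\beta q=x\alpha(s\gamma p)\beta q$---points the paper leaves implicit---but the underlying strategy is the same.
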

\begin{proof} Since $\mu$ and $\nu$ be any two fuzzy h-ideal of S, by Theorem 35 of \cite{SD1} we have $\mu\times\nu$
is also a fuzzy h-ideal and hence by using Theorem \ref{Prop 3.2} we deduce that $<x,\mu\times\nu>$ is also a fuzzy
h-ideal of S.
\end{proof}
\begin{Theorem} Let $\mu$, $\nu$ be any two fuzzy h-ideal of S and x,y$\in$S. Then $<x,\mu>\times<y,\nu>$ is also a
fuzzy h-ideal of S.
\end{Theorem}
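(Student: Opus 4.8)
The plan is to reduce the statement to results already in hand: Theorem \ref{Prop 3.2} together with its left-sided counterpart (the Note following it), and the fact from \cite{SD1} (Theorem 35) that the cartesian product of two fuzzy h-ideals of $S$ is again a fuzzy h-ideal of $S\times S$. First I would note that $<x,\mu>$ and $<y,\nu>$ are themselves fuzzy h-ideals of $S$: since $\mu$ is in particular a fuzzy right h-ideal, Theorem \ref{Prop 3.2} gives that $<x,\mu>$ is a fuzzy right h-ideal, and the symmetric argument (replacing $x\alpha s\gamma p$ by $p\alpha s\gamma x$ in the relevant inequalities, or invoking the Note in the commutative setting) yields that it is also a fuzzy left h-ideal; the same applies to $<y,\nu>$. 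Then I would apply Theorem 35 of \cite{SD1} to the pair $<x,\mu>$, $<y,\nu>$ to conclude that $<x,\mu>\times<y,\nu>$ is a fuzzy h-ideal of $S\times S$.

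Alternatively, one can argue directly from the definitions, which is essentially a bookkeeping exercise. Writing $\lambda=<x,\mu>\times<y,\nu>$, for $(p_1,p_2),(q_1,q_2)\in S\times S$ one has $\lambda((p_1,p_2)+(q_1,q_2))=\min\{<x,\mu>(p_1+q_1),<y,\nu>(p_2+q_2)\}$, and the additive and multiplicative axioms for $\lambda$ follow componentwise from those for $<x,\mu>$ and $<y,\nu>$, using that $\min$ interacts with the relevant infima in the expected way, e.g. $\min\{\inf_i f_i,\inf_j g_j\}=\inf_{i,j}\min\{f_i,g_j\}$. For the h-condition, a relation $(p_1,p_2)+(a_1,a_2)+(z_1,z_2)=(b_1,b_2)+(z_1,z_2)$ in $S\times S$ is equivalent to the pair of relations $p_i+a_i+z_i=b_i+z_i$ ($i=1,2$) in $S$, so the h-inequality for $\lambda$ reduces to the h-inequalities for $<x,\mu>$ and $<y,\nu>$, again combined via $\min$.

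The only genuine subtlety, and hence the step I would treat most carefully, is the two-sidedness of the extensions: Theorem \ref{Prop 3.2} as stated only delivers a fuzzy right h-ideal, whereas the cartesian-product theorem is phrased for (two-sided) fuzzy h-ideals. To supply the left-sided part one needs the analogue of Theorem \ref{Prop 3.2} for fuzzy left h-ideals, which is precisely where commutativity of $S$ enters, as in the Note. Accordingly I would either state the present theorem for commutative $S$, in line with the other propositions of this section, or carry along parallel left/right verifications. Everything else is routine, requiring no computation beyond that already performed in the proof of Theorem \ref{Prop 3.2}.
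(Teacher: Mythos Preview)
Your approach matches the paper's proof exactly: invoke Theorem \ref{Prop 3.2} to obtain that $<x,\mu>$ and $<y,\nu>$ are fuzzy h-ideals of $S$, and then apply Theorem 35 of \cite{SD1} to their cartesian product. You are in fact more careful than the paper about the left-sided half of the extension (the paper simply asserts that Theorem \ref{Prop 3.2} yields two-sided fuzzy h-ideals without addressing the point you raise), so your discussion of that subtlety is a genuine improvement rather than a deviation.
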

\begin{proof} Since $\mu$ and $\nu$ be any two fuzzy h-ideal of S, by Theorem \ref{Prop 3.2} we have $<x,\mu>$ and
$<y,\nu>$ are fuzzy h-ideals of S. Hence by using Theorem 35 of \cite{SD1} we deduce that $<x,\mu>\times<y,\nu>$ is
also a fuzzy h-ideal of S.
\end{proof}

\begin{Proposition} Let $\mu $ be a prime fuzzy h-ideal of S. Then for
all x,y $\in$ S, $$\underset{s\in S}{\underset{\alpha,\gamma\in\Gamma} {\inf}}\mu(x\alpha s \gamma  y)=\max [\mu(x),\mu(y)].$$
\\Conversely, let $\mu $ be a fuzzy h-ideal of S such that $Im ~\mu=\{1,t\}, t \in
[0,1)$.\\
If $\underset{s\in S}{\underset{\alpha,\gamma\in\Gamma} {\inf}} \mu(x\alpha s \gamma  y)=\max [\mu(x),\mu(y)]$
for all x,y $\in$ S then $\mu $ is a prime fuzzy h-ideal of
S.\label{Th:3.4}\end{Proposition}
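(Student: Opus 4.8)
The plan is to prove the two implications separately, in each case reducing to the crisp characterisations in Theorem \ref{Charc. Prime} (prime fuzzy $h$-ideals) and Theorem \ref{Th:2.3} (prime $h$-ideals), after observing that $\underset{s\in S}{\underset{\alpha,\gamma\in\Gamma}{\inf}}\mu(x\alpha s\gamma y)$ is precisely the value $<x,\mu>(y)$ of the extension of $\mu$ by $x$ at $y$.

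For the forward implication, suppose $\mu$ is a prime fuzzy $h$-ideal. By Theorem \ref{Charc. Prime} we have $Im\,\mu=\{1,t\}$ with $t\in[0,1)$, $\mu(0)=1$, and $\mu_{0}$ is a prime $h$-ideal of $S$. The inequality $<x,\mu>(y)\geq\max[\mu(x),\mu(y)]$ follows at once from the $h$-ideal axioms: using associativity, $x\alpha s\gamma y=(x\alpha s)\gamma y=x\alpha(s\gamma y)$, so the left and right $h$-ideal properties of $\mu$ give $\mu(x\alpha s\gamma y)\geq\mu(y)$ and $\mu(x\alpha s\gamma y)\geq\mu(x)$, and we take the infimum over $s,\alpha,\gamma$. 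For the reverse inequality, if $\max[\mu(x),\mu(y)]=1$ there is nothing to prove, so assume $\max[\mu(x),\mu(y)]=t$; since $Im\,\mu=\{1,t\}$ this forces $\mu(x)=\mu(y)=t$, i.e. $x\notin\mu_{0}$ and $y\notin\mu_{0}$. As $\mu_{0}$ is a prime $h$-ideal, the contrapositive of Theorem \ref{Th:2.3}(ii) produces $s\in S$ and $\alpha,\gamma\in\Gamma$ with $x\alpha s\gamma y\notin\mu_{0}$, hence $\mu(x\alpha s\gamma y)=t$, so $<x,\mu>(y)\leq t$. Combined with the previous inequality this gives $<x,\mu>(y)=\max[\mu(x),\mu(y)]$.

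For the converse, suppose $\mu$ is a fuzzy $h$-ideal with $Im\,\mu=\{1,t\}$, $t\in[0,1)$, satisfying the displayed identity for all $x,y\in S$; I will verify the three conditions of Theorem \ref{Charc. Prime}. Condition (ii) is the hypothesis. For (i), one has $\mu(0)\geq\mu(w)$ for every $w\in S$ (for instance by applying part (iii) of the definition of a fuzzy $h$-ideal to the identity $0+w+0=w+0$), and since $1\in Im\,\mu$ this forces $\mu(0)=1$. For (iii), $\mu_{0}$ is an $h$-ideal of $S$ (a standard fact about level sets of fuzzy $h$-ideals, cf. \cite{SD2}), and it is proper since $t<1$ leaves some element outside it; to see that it is prime, let $a,b\in S$ with $a\Gamma S\Gamma b\subseteq\mu_{0}$. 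Then $\mu(a\alpha s\gamma b)=1$ for all $s\in S$, $\alpha,\gamma\in\Gamma$, so $\underset{s\in S}{\underset{\alpha,\gamma\in\Gamma}{\inf}}\mu(a\alpha s\gamma b)=1$, and the hypothesis yields $\max[\mu(a),\mu(b)]=1$, i.e. $a\in\mu_{0}$ or $b\in\mu_{0}$; by Theorem \ref{Th:2.3}, $\mu_{0}$ is prime. Theorem \ref{Charc. Prime} then gives that $\mu$ is a prime fuzzy $h$-ideal.

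I anticipate no real difficulty; the only step needing care is the reverse inequality in the forward implication, where ``$\mu_{0}$ prime'' must be converted into the existence of a single triple $(s,\alpha,\gamma)$ with $\mu(x\alpha s\gamma y)=t$. This works precisely because $Im\,\mu$ contains only the two values $1$ and $t$, so ``$\mu(x\alpha s\gamma y)\neq 1$'' immediately pins the value to $t$ and hence bounds the infimum by $t$; without the two-element image hypothesis the argument would break down. A minor bookkeeping point is to record explicitly the identification of $<x,\mu>(y)$ with the infimum appearing in the statement, so that the hypothesis of the converse is invoked in exactly the form that is used.
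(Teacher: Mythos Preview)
Your proposal is correct and follows essentially the same route as the paper: both directions are reduced to the crisp characterisations in Theorem~\ref{Charc. Prime} and Theorem~\ref{Th:2.3}, with the same case split on $\max[\mu(x),\mu(y)]\in\{1,t\}$ in the forward direction and the same verification that $\mu_{0}$ is prime in the converse. Your write-up is somewhat more explicit than the paper's (you record the easy inequality $\inf\mu(x\alpha s\gamma y)\geq\max[\mu(x),\mu(y)]$ via the two-sided $h$-ideal property, and you check condition~(i) of Theorem~\ref{Charc. Prime} separately), but the argument is the same.
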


\begin{proof} Let $\mu $ be a prime fuzzy h-ideal of S. Then by
Theorem \ref{Charc. Prime} we have, $$\underset{s\in S}{\underset{\alpha,\gamma\in\Gamma} {\inf}}\mu(x\alpha s \gamma y)=1 ~ or~ t.$$
\\Case I. Let $\max [\mu(x),\mu(y)]=1$. Then suppose that
$\mu(x)=1$. Consequently, $x \in \mu_{0}$. As $\mu_{0}$ is an h-ideal
of S, $x\alpha s \gamma y \in \mu_{0}$ for all s $\in$ S and $ \gamma \in \Gamma$. Thus
$$\underset{s\in S}{\underset{\alpha,\gamma\in\Gamma} {\inf}} \mu(x\alpha s \gamma y)=1=\max [\mu(x),\mu(y)].
$$
\\Case II. Let $\max [\mu(x),\mu(y)]=t$. Then
$\mu(x)=\mu(y)=t$. This implies that $x,y \not\in \mu_{0}$.
Since $\mu_{0}$ is a prime h-ideal of S, so $x \Gamma S \Gamma y \not
\subseteq \mu_{0}$. Thus there exists some $\alpha_{1},\gamma_{1}\in \Gamma$ and $s_{1}\in S$ such that $x \alpha_{1} s_{1}\gamma_{1} y \not\in \mu_{0}$, i.e.,
$\mu(x\alpha_{1} s_{1}\gamma_{1} y) \neq 1$. Therefore $\mu(x \alpha_{1}s_{1}\gamma_{1}y) = t$. Thus $$\underset{s\in S}{\underset{\alpha,\gamma\in\Gamma} {\inf}}\mu(x\alpha s
\gamma y)= t=\max
[\mu(x),\mu(y)]. $$
\\For the converse part, let $x,y \in S$ such that $x \Gamma S \Gamma y
\subseteq \mu_{0}$. Then $x \alpha s \gamma y \in \mu_{0}$ for all $ \alpha,\gamma\in \Gamma$ and $s\in S$. So
$\underset{s\in S}{\underset{\alpha,\gamma\in\Gamma} {\inf}} \mu(x\alpha s \gamma y)=1=\max [\mu(x),\mu(y)]
$. This implies that either $\mu(x)=1$ or $\mu(y)=1$, i.e., either
$x \in \mu_{0}$ or $y \in \mu_{0}$. Consequently, $\mu_{0}$ is a
prime h-ideal of S by Theorem \ref{Th:2.3}. Hence by Theorem
\ref{Charc. Prime}, $\mu$ is a prime fuzzy h-ideal of S.
\end{proof}

\begin{Proposition} If $\mu$ is a prime(semiprime) fuzzy h-ideal of S then its extension by x$\in$S, $<x,\mu>$ is also
a prime(semiprime) fuzzy h-ideal of S.
\end{Proposition}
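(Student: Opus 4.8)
The plan is to argue through the level-set descriptions of (semi)primeness --- Theorem \ref{Charc. Prime} in the prime case and its semiprime analogue --- rather than wrestling with the fuzzy product. First, by Theorem \ref{Prop 3.2} (and its left-handed mirror, or the Note, so that both-sidedness is kept) the extension $<x,\mu>$ is again a fuzzy h-ideal of $S$; since $\mu\subseteq<x,\mu>$ was shown earlier, $<x,\mu>(0)\geq\mu(0)=1$ gives $<x,\mu>(0)=1$. Moreover $<x,\mu>(y)=\inf_{s\in S,\,\alpha,\gamma\in\Gamma}\mu(x\alpha s\gamma y)$ is an infimum of numbers from $Im~\mu=\{1,t\}$, so $Im~<x,\mu>\subseteq\{1,t\}$, with $<x,\mu>(y)=1$ exactly when $x\Gamma S\Gamma y\subseteq\mu_{0}$. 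Hence everything reduces to the crisp h-ideal $(<x,\mu>)_{0}=\{y\in S:x\Gamma S\Gamma y\subseteq\mu_{0}\}$.

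In the prime case I would show, assuming $x\notin\mu_{0}$, that $(<x,\mu>)_{0}=\mu_{0}$: if $y\in\mu_{0}$ then $x\Gamma S\Gamma y\subseteq\mu_{0}$ because $\mu_{0}$ is an h-ideal, while $x\Gamma S\Gamma y\subseteq\mu_{0}$ forces, by Theorem \ref{Th:2.3} applied to the prime h-ideal $\mu_{0}$, that $x\in\mu_{0}$ or $y\in\mu_{0}$, hence $y\in\mu_{0}$. As $<x,\mu>$ and $\mu$ are then both $\{1,t\}$-valued with the same $1$-level, in fact $<x,\mu>=\mu$, so the three conditions of Theorem \ref{Charc. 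Prime} for $<x,\mu>$ coincide with those for $\mu$ and $<x,\mu>$ is prime. (Alternatively the identity of Proposition \ref{Th:3.4} can be verified for $<x,\mu>$ directly, by distributing the infimum over the maximum.)

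For the semiprime case the same reduction applies, using now that an h-ideal $I$ is semiprime iff $a\Gamma S\Gamma a\subseteq I$ implies $a\in I$, together with the semiprime analogue of Theorem \ref{Charc. Prime}. Here $(<x,\mu>)_{0}$ may strictly contain $\mu_{0}$, but it is still a proper semiprime h-ideal: properness holds since $x\in(<x,\mu>)_{0}$ would give $x\Gamma S\Gamma x\subseteq\mu_{0}$, hence $x\in\mu_{0}$; and if $a\notin(<x,\mu>)_{0}$, choosing $c=x\alpha's'\gamma'a\notin\mu_{0}$ and applying semiprimeness of $\mu_{0}$ yields $u,\delta_{1},\delta_{2}$ with $c\delta_{1}u\delta_{2}c\notin\mu_{0}$; since $c\delta_{1}u\delta_{2}c=x\alpha's'\gamma'(a\delta_{1}(u\delta_{2}x\alpha's')\gamma'a)$ and the parenthesised factor lies in $a\Gamma S\Gamma a$, this exhibits an element of $x\Gamma S\Gamma(a\Gamma S\Gamma a)$ outside $\mu_{0}$, so $a\Gamma S\Gamma a\not\subseteq(<x,\mu>)_{0}$. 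Hence $<x,\mu>$ is semiprime.

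The real obstacle is not a computation but the hypothesis itself: the claim holds only for $x\notin\mu_{0}$ (equivalently $\mu(x)<\mu(0)=1$). If $x\in\mu_{0}$ then $x\Gamma S\Gamma y\subseteq\mu_{0}$ for every $y$, so $<x,\mu>\equiv1$, which is constant and hence, by the definition used here, neither prime nor semiprime. So the statement must be read with $x\notin\mu_{0}$ assumed; pinning down this caveat --- rather than the routine verifications or the small associativity regrouping above --- is the delicate point.
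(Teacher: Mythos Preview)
Your argument is correct and takes a genuinely different route from the paper. The paper's proof works directly with the ``max-identity'' of Proposition \ref{Th:3.4}: it computes
\[
\inf_{s_{1},\delta,\eta}\langle x,\mu\rangle(p\delta s_{1}\eta q)
=\inf_{s_{1},\delta,\eta}\inf_{s,\alpha,\gamma}\mu(x\alpha s\gamma p\delta s_{1}\eta q),
\]
applies the identity for $\mu$ twice to reach $\max\{\mu(x),\mu(p),\mu(q)\}$, and then repackages this as $\max\{\langle x,\mu\rangle(p),\langle x,\mu\rangle(q)\}$; the converse part of Proposition \ref{Th:3.4} then closes the prime case, with the semiprime case dismissed as ``similarly''. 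You instead pass through the level-set characterization Theorem \ref{Charc. Prime}, identify $(\langle x,\mu\rangle)_{0}=\{y:x\Gamma S\Gamma y\subseteq\mu_{0}\}$, and in the prime case (with $x\notin\mu_{0}$) recover $\langle x,\mu\rangle=\mu$ outright --- which is exactly Theorem \ref{Th:3.8} of the paper. Your approach costs a little more in set-up but buys two things: it makes the semiprime step explicit (the regrouping $c\delta_{1}u\delta_{2}c=x\alpha's'\gamma'\bigl(a\delta_{1}(u\delta_{2}x\alpha's')\gamma'a\bigr)$ is the one nontrivial move), and it checks the image condition $Im~\langle x,\mu\rangle\subseteq\{1,t\}$ that the paper's proof uses implicitly when invoking the converse of Proposition \ref{Th:3.4}.

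Your caveat about $x\notin\mu_{0}$ is well taken and is a real gap in the statement as written: the paper itself shows in Theorem \ref{Th:3.9} that $\langle x,\mu\rangle=\mathbf{1}_{S}$ whenever $x\in\mu_{0}$, which is constant and hence, by Definition 2.8, neither prime nor semiprime. So the proposition is valid only for $x\notin\mu_{0}$, precisely as you say.
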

\begin{proof} Let $\mu$ be a prime fuzzy h-ideal of S and its extension by x$\in$S is $<x,\mu>$. Let p,q,$s_{1}\in$ S and $\delta,\eta\in \Gamma$. Then\\
$$
\begin{array}{ll}\underset{s_{1}\in S}{\underset{\delta,\eta\in\Gamma} {\inf}} <x,\mu>(p\delta s_{1} \eta q)&=\underset{s_{1}\in S}{\underset{\delta,\eta\in\Gamma} {\inf}}\underset{s\in S}{\underset{\alpha,\gamma\in\Gamma} {\inf}}\mu(x\alpha s \gamma p\delta s_{1} \eta  q)\\
&=\underset{s\in S}{\underset{\alpha,\gamma\in\Gamma} {\inf}}\{max(\mu(x\alpha s\gamma p),\mu(q))\}\\
&=\max\{ \underset{s\in S}{\underset{\alpha,\gamma\in\Gamma} {\inf}}\mu(x\alpha s\gamma p), \mu(q)\}\\
&=\max\{\mu(x),\mu(p),\mu(q)\}\\
&=\max\{ \max\{\mu(x),\mu(p)\},\max\{\mu(x),\mu(q)\} \}\\
&=\max\{\underset{s\in S}{\underset{\alpha,\gamma\in\Gamma} {\inf}}\mu(x\alpha s\gamma p),\underset{s\in S}{\underset{\alpha,\gamma\in\Gamma} {\inf}}\mu(x\alpha s\gamma q)\}\\
&=\max\{ <x,\mu>(p),<x,\mu>(q)\}
\end{array}
$$

Similarly, we can prove the result for semiprime fuzzy h-ideal.

\end{proof}

\begin{Proposition} Let $\mu $ be a prime fuzzy h-ideal of S and $x \in
S$. Then \\$<x,\mu>(y)=\displaystyle{\inf_{\begin{array}{l} s_{1}\in S
\\\eta,\delta \in
\Gamma \end{array}}} [<x \eta s_{1} \delta x,\mu> (y)]$ for all $y \in
S$.\end{Proposition}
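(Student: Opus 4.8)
The plan is to reduce both sides to the closed-form expression for the prime-ideal infimum supplied by Proposition~\ref{Th:3.4}, and then to push an infimum through a maximum taken against the fixed value $\mu(y)$.

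First I would record the two instances of Proposition~\ref{Th:3.4} that are needed. Combining the definition of the extension with that proposition gives $<x,\mu>(y)=\underset{s\in S}{\underset{\alpha,\gamma\in\Gamma}{\inf}}\mu(x\alpha s\gamma y)=\max[\mu(x),\mu(y)]$. Likewise, for each $s_{1}\in S$ and $\eta,\delta\in\Gamma$ the product $x\eta s_{1}\delta x$ is an element of $S$, so the same proposition yields $<x\eta s_{1}\delta x,\mu>(y)=\max[\mu(x\eta s_{1}\delta x),\mu(y)]$. Hence the right-hand side of the claimed identity equals $\inf_{s_{1}\in S,\,\eta,\delta\in\Gamma}\max[\mu(x\eta s_{1}\delta x),\mu(y)]$.

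Next I would pull $\mu(y)$ outside the infimum. Since $\mu$ is prime, Theorem~\ref{Charc. Prime} tells us that $Im~\mu=\{1,t\}$ with $t\in[0,1)$; in particular every term $\max[\mu(x\eta s_{1}\delta x),\mu(y)]$ lies in the two-element set $\{1,t\}$, so the infimum is a minimum of such values and the elementary identity $\inf_{i}\max(a_{i},b)=\max(\inf_{i}a_{i},b)$ applies, giving $\inf_{s_{1}\in S,\,\eta,\delta\in\Gamma}\max[\mu(x\eta s_{1}\delta x),\mu(y)]=\max[\inf_{s_{1}\in S,\,\eta,\delta\in\Gamma}\mu(x\eta s_{1}\delta x),\,\mu(y)]$. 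But $\inf_{s_{1}\in S,\,\eta,\delta\in\Gamma}\mu(x\eta s_{1}\delta x)$ is, by the definition of the extension, exactly $<x,\mu>(x)$, and one further application of Proposition~\ref{Th:3.4} (with $y$ replaced by $x$) shows $<x,\mu>(x)=\max[\mu(x),\mu(x)]=\mu(x)$. Substituting back, the right-hand side becomes $\max[\mu(x),\mu(y)]=<x,\mu>(y)$, which is the asserted equality.

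The computation presents no real obstacle; the one step deserving a line of justification is the interchange of the infimum with the maximum, and this is immediate precisely because a prime fuzzy h-ideal assumes only the two values $1$ and $t$, so that the infimum over $s_{1},\eta,\delta$ is the minimum of a family of elements of $\{1,t\}$ and the identity $\inf\max(\cdot,b)=\max(\inf(\cdot),b)$ holds trivially (it is in any case valid for arbitrary families in $[0,1]$).
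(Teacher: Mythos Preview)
Your proof is correct and follows essentially the same route as the paper's: both arguments use Proposition~\ref{Th:3.4} to rewrite each extension as a $\max$, interchange the infimum with the outer $\max$ against the fixed value $\mu(y)$, and then apply Proposition~\ref{Th:3.4} once more to evaluate $\inf_{s_{1},\eta,\delta}\mu(x\eta s_{1}\delta x)=\mu(x)$. The only difference is that you explicitly justify the $\inf$--$\max$ interchange (noting it holds for arbitrary families in $[0,1]$), whereas the paper performs that step without comment.
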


\begin{proof} Let $y \in S$. Then
$$
\begin{array}{ll}\displaystyle{\inf_{\begin{array}{l} s_{1}\in S
\\ \eta,\delta \in
\Gamma \end{array}}} [<x \eta s_{1} \delta x,\mu>
(y)]&=\displaystyle{\inf_{\begin{array}{l} s_{1}\in S
\\\eta,\delta \in
\Gamma \end{array}}}\underset{s\in S}{\underset{\alpha,\gamma\in\Gamma} {\inf}}\mu(x \eta s_{1} \delta x\alpha s\gamma y)
\\&=\displaystyle{\inf_{\begin{array}{l} s_{1}\in S
\\ \eta,\delta \in
\Gamma \end{array}}} \max [\mu(x\eta s_{1} \delta
x),\mu(y)]\\
&=\max[\displaystyle{\inf_{\begin{array}{l} s_{1}\in S
\\ \eta,\delta \in
\Gamma \end{array}}} \mu(x \eta s_{1} \delta
x),\mu(y)]\\
&=\max[\max[\mu(x),\mu(x)],\mu(y)]=\max[\mu(x),\mu(y)]\\
&=\underset{s\in S}{\underset{\alpha,\gamma\in\Gamma} {\inf}}\mu(x\alpha s \gamma y)=<x,\mu>(y)
\end{array}
$$
\end{proof}

\begin{Definition} Let $A \subseteq S$ and $x \in S$. Then we define
$<x,A>$ as \\$<x,A>=\{ y \in S: x \Gamma S \Gamma y \subseteq A\}$.
\end{Definition}

\begin{Proposition} Let A be a non empty subset of S. Then
$<x,\lambda_{A}>=\lambda_{<x,A>}$ for every $x \in S$ where
$\lambda_{A}$ denotes the characteristic function of
A.\end{Proposition}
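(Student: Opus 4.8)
The plan is to prove the equality of two fuzzy subsets of $S$ by showing they agree pointwise, i.e. that $<x,\lambda_{A}>(y)=\lambda_{<x,A>}(y)$ for every $y\in S$. Since $\lambda_{<x,A>}$ is a characteristic function, its value at $y$ is either $1$ or $0$, so the argument splits into two cases according to whether $y\in<x,A>$ or $y\notin<x,A>$.

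First I would unwind the definitions. By the definition of the extension, $<x,\lambda_{A}>(y)=\inf_{s\in S,\ \alpha,\gamma\in\Gamma}\lambda_{A}(x\alpha s\gamma y)$, and by the definition of $<x,A>$, the statement $y\in<x,A>$ means precisely that $x\Gamma S\Gamma y\subseteq A$, i.e. $x\alpha s\gamma y\in A$ for all $s\in S$ and all $\alpha,\gamma\in\Gamma$. In the first case, when $y\in<x,A>$, every term $\lambda_{A}(x\alpha s\gamma y)$ equals $1$, so the infimum is $1=\lambda_{<x,A>}(y)$. In the second case, when $y\notin<x,A>$, there exist $s_{0}\in S$ and $\alpha_{0},\gamma_{0}\in\Gamma$ with $x\alpha_{0}s_{0}\gamma_{0}y\notin A$, hence $\lambda_{A}(x\alpha_{0}s_{0}\gamma_{0}y)=0$, and since all terms of the infimum lie in $\{0,1\}$, the infimum is $0=\lambda_{<x,A>}(y)$. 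Combining the two cases gives $<x,\lambda_{A}>=\lambda_{<x,A>}$.

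I do not expect a genuine obstacle here; the proof is a direct translation between the set-theoretic and fuzzy formulations of the same extension operation, and the only point requiring any care is to observe that an infimum of a family of values drawn from the two-element set $\{0,1\}$ is $1$ exactly when every value is $1$ and is $0$ otherwise — which is what makes the case split clean. One should also note at the outset that $A$ being nonempty is used only to keep $\lambda_{A}$ a legitimate (nonzero) fuzzy subset in the spirit of the paper's conventions; it plays no role in the pointwise computation itself.
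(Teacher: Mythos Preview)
Your proof is correct and follows essentially the same approach as the paper: a pointwise comparison that splits into two cases according to whether the value is $1$ or $0$. The only cosmetic difference is that the paper's case split is phrased in terms of the value of $<x,\lambda_{A}>(y)$ rather than membership of $y$ in $<x,A>$, but the logical content is identical.
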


\begin{proof} Let $y \in S$. Then
$<x,\lambda_{A}>(y)=\underset{s\in S}{\underset{\alpha,\gamma\in\Gamma} {\inf}} \lambda_{A}(x\alpha s \gamma y)=1$ or $0$.
\\Case I. If $<x,\lambda_{A}>(y)=1$ then $\lambda_{A}(x\alpha s \gamma y)=1$ for all $ \alpha,\gamma \in \Gamma$ and $s\in S$.
Thus $x \alpha s \gamma y \in A$ for all
$\alpha,\gamma\in \Gamma$ and $s\in S$, i.e., $x \Gamma S \Gamma y \subseteq A$.
i.e., $y \in <x,A>$. Consequently $\lambda_{<x,A>}(y)=1$. So,
$<x,\lambda_{A}>=\lambda_{<x,A>}$.
\\Case II. If $<x,\lambda_{A}>(y)=0$ then $\lambda_{A}(x\alpha s \gamma y)=0$ for some $ \alpha,\gamma \in \Gamma$ and $s\in S$. So $x\alpha s
\gamma y \not\in A$ for some $ \alpha,\gamma\in \Gamma$ and $s\in S$. Therefore $x \Gamma S \Gamma y \not \subseteq A$ which
implies that $y \not\in <x,A>$. Thus $\lambda_{<x,A>}(y)=0$. Hence
$<x,\lambda_{A}>=\lambda_{<x,A>}$. This proves the proposition.
\end{proof}

\begin{Theorem} Let $\mu $ be a prime fuzzy h-ideal of S and $x \in
S$ be such that $x \not\in \mu_{0}$. Then $<x,\mu>=\mu$.
\\Conversely, let $\mu $ be a fuzzy h-ideal of S such that $Im~ \mu=\{1,t\}, t \in
[0,1)$. If $<x,\mu>=\mu$ for those $x \in S$ for which
$\mu(x)=\alpha$, then $\mu$ is a prime fuzzy h-ideal of
S.\label{Th:3.8}\end{Theorem}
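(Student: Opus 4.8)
The plan is to handle the two implications separately, leaning heavily on Proposition \ref{Th:3.4} and Theorem \ref{Charc. Prime}. For the forward direction, suppose $\mu$ is a prime fuzzy h-ideal of $S$ and $x \notin \mu_{0}$, i.e. $\mu(x) \neq \mu(0)$. By Theorem \ref{Charc. Prime} we have $\mu(0)=1$ and $\operatorname{Im}\mu = \{1,t\}$ with $t \in [0,1)$, so $\mu(x)=t$. Now for any $y \in S$, Proposition \ref{Th:3.4} gives
$$<x,\mu>(y)=\underset{s\in S}{\underset{\alpha,\gamma\in\Gamma}{\inf}}\mu(x\alpha s\gamma y)=\max[\mu(x),\mu(y)]=\max[t,\mu(y)].$$
Since $\operatorname{Im}\mu=\{1,t\}$, either $\mu(y)=1$, in which case $\max[t,\mu(y)]=1=\mu(y)$, or $\mu(y)=t$, in which case $\max[t,\mu(y)]=t=\mu(y)$. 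In both cases $<x,\mu>(y)=\mu(y)$, so $<x,\mu>=\mu$.

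For the converse, assume $\mu$ is a fuzzy h-ideal with $\operatorname{Im}\mu=\{1,t\}$, $t\in[0,1)$, and $<x,\mu>=\mu$ whenever $\mu(x)=\alpha$ — here I read $\alpha$ as the value $t$ (the unique non-unit value of $\mu$), matching the notation drift already present in Proposition \ref{Th:3.4}. I would invoke the converse half of Proposition \ref{Th:3.4}: it suffices to show $\underset{s\in S}{\underset{\alpha,\gamma\in\Gamma}{\inf}}\mu(x\alpha s\gamma y)=\max[\mu(x),\mu(y)]$ for all $x,y\in S$. Fix $x,y$. If $\mu(x)=1$ then $x\in\mu_{0}$, and since $\mu_{0}$ is an h-ideal (the $\mu(0)$-level set of a fuzzy h-ideal), $x\alpha s\gamma y\in\mu_{0}$ for all $s,\alpha,\gamma$, so the infimum equals $1=\max[\mu(x),\mu(y)]$. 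Symmetrically if $\mu(y)=1$ the infimum is again $1$ because $\mu_{0}$ is a right (hence two-sided-enough) h-ideal, so $x\alpha s\gamma y\in\mu_{0}$. The remaining case is $\mu(x)=\mu(y)=t$; then by hypothesis $<x,\mu>=\mu$, so $<x,\mu>(y)=\mu(y)=t$, i.e. $\underset{s\in S}{\underset{\alpha,\gamma\in\Gamma}{\inf}}\mu(x\alpha s\gamma y)=t=\max[\mu(x),\mu(y)]$. This establishes the hypothesis of the converse part of Proposition \ref{Th:3.4}, whence $\mu$ is prime.

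The main obstacle is the case $\mu(y)=1 > \mu(x)=t$ in the converse: the extension hypothesis $<x,\mu>=\mu$ is only assumed for $x$ with $\mu(x)=t$, and it gives information about $<x,\mu>$ as a whole function, so $<x,\mu>(y)=\mu(y)=1$ directly forces the infimum to be $1$ in that sub-case too — so in fact the case split collapses and the only genuinely needed input beyond the hypothesis is that $\mu_{0}$ is an h-ideal to cover $\mu(x)=1$. I should double-check that this covers every $(x,y)$ pair and that no circularity arises with Proposition \ref{Th:3.4}. A secondary subtlety worth a sentence is clarifying the meaning of $\alpha$ in the statement — it denotes the value $t$, and I would note this explicitly to avoid confusion with the $\Gamma$-elements $\alpha$ used elsewhere.
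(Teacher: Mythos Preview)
Your proposal is correct and follows essentially the same approach as the paper: both directions reduce to verifying the identity $\inf_{\alpha,\gamma,s}\mu(x\alpha s\gamma y)=\max[\mu(x),\mu(y)]$ and then invoking the converse half of Proposition~\ref{Th:3.4}. The only cosmetic difference is that for the forward direction the paper re-derives the case split on $y\in\mu_{0}$ versus $y\notin\mu_{0}$ using the primality of $\mu_{0}$ directly, whereas you cite Proposition~\ref{Th:3.4} to obtain that identity in one stroke; your observation that the $\mu(x)=t$ case of the converse already subsumes the sub-case $\mu(y)=1$ (so only the dichotomy on $\mu(x)$ is needed) exactly matches the paper's two-case argument.
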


\begin{proof} Let $\mu $ be a fuzzy prime h-ideal of S. Then by
Theorem \ref{Charc. Prime}, \\(i) $\mu(0)=1$, (ii) $ Im ~\mu =\{1,
t\}$ with $t \in [0,1)$,
\\(iii) $\mu_{0}=\{x \in S: \mu(x)=\mu(0)\}$ is a prime h-ideal of
S.
\\Case I. If $ y \in \mu_{0}$ then $x \alpha s\gamma y \in \mu_{0}$ for all $ \alpha,\gamma \in\Gamma$ and $s\in S$. So
$\underset{s\in S}{\underset{\alpha,\gamma\in\Gamma} {\inf}} \mu(x\alpha s \gamma y)=1=\mu(y)$. Therefore
$<x,\mu>=\mu$.
\\Case II. Let $y \not\in \mu_{0}$. Since $\mu_{0}$ is a prime h-ideal
of S and $x,y \not\in \mu_{0}$, so $x \Gamma S \Gamma y \not
\subseteq \mu_{0}$. Therefore there exist some
$\alpha,\gamma \in \Gamma$ and $s\in S$ such that $x\alpha s \gamma y \not\in\mu_{0}$. Therefore $\mu(x\alpha s \gamma  y)=t$. Thus
$\underset{s\in S}{\underset{\alpha,\gamma\in\Gamma} {\inf}}\mu(x\alpha s \gamma y)=t=\mu(y)$ and
hence $<x,\mu>=\mu$. Consequently, $<x,\mu>=\mu$ for all $x \in
\mu_{0}$. \\Conversely, let $x,y \in S $.
\\Case I. Let $\mu(x)=t$. Then $\max \{ \mu(x), \mu(y)
\}=\mu(y)$. Now $\mu(y)=<x,\mu>(y)$ implies that $\max \{ \mu(x),
\mu(y) \}=\underset{s\in S}{\underset{\alpha,\gamma\in\Gamma} {\inf}}\mu(x\alpha s \gamma  y)$.
\\Case II. Let $\mu(x)=1$. Then $x \in \mu_{0}$. So $x \alpha s \gamma  y \in \mu_{0}$ for
all $\alpha, \gamma \in \Gamma$ and $s\in S$. Thus
$\underset{s\in S}{\underset{\alpha,\gamma\in\Gamma} {\inf}} \mu(x\alpha s \gamma  y)=1=\mu(x)=\max \{
\mu(x), \mu(y) \}$.\\Thus $\underset{s\in S}{\underset{\alpha,\gamma\in\Gamma} {\inf}}\mu(x\alpha s \gamma y)=\max \{ \mu(x), \mu(y)
\}$ for all $x,y \in S$.
\\Hence by the converse part of Theorem \ref{Th:3.4}, $\mu$ is a
prime h-ideal of S.
\end{proof}

\begin{Theorem} Let $\mu$ be a prime fuzzy h-ideal of S and $x \in S$
be such that $x \in \mu_{0}$. Then
$<x,\mu>=\textbf{1}_{S}$.\label{Th:3.9}\end{Theorem}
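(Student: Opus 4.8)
The plan is to show that $<x,\mu>$ takes the value $1$ at every point of $S$. Fix an arbitrary $y\in S$; by definition $<x,\mu>(y)=\underset{s\in S}{\underset{\alpha,\gamma\in\Gamma}{\inf}}\mu(x\alpha s\gamma y)$, so it is enough to verify that each term $\mu(x\alpha s\gamma y)$ occurring in this infimum equals $1$.

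To do that I would first apply Theorem \ref{Charc. Prime} to the prime fuzzy h-ideal $\mu$: this yields $\mu(0)=1$, $\mathrm{Im}\,\mu=\{1,t\}$ for some $t\in[0,1)$, and, crucially, that $\mu_{0}=\{a\in S:\mu(a)=\mu(0)\}$ is a prime h-ideal of $S$. Since $x\in\mu_{0}$ and $\mu_{0}$ is in particular a right h-ideal, the absorption $\mu_{0}\Gamma S\subseteq\mu_{0}$ gives $x\alpha s\in\mu_{0}$ for all $s\in S,\alpha\in\Gamma$, and a second application gives $(x\alpha s)\gamma y\in\mu_{0}$ for all $\gamma\in\Gamma,y\in S$. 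Hence $\mu(x\alpha s\gamma y)=\mu(0)=1$ for every choice of $s,\alpha,\gamma$, so the infimum defining $<x,\mu>(y)$ is $1$. As $y$ was arbitrary, $<x,\mu>=\textbf{1}_{S}$.

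Alternatively, one can bypass the absorption step entirely and invoke the first assertion of Proposition \ref{Th:3.4}, namely $\underset{s\in S}{\underset{\alpha,\gamma\in\Gamma}{\inf}}\mu(x\alpha s\gamma y)=\max[\mu(x),\mu(y)]$; since $x\in\mu_{0}$ forces $\mu(x)=\mu(0)=1$, the right-hand side equals $1$ for every $y\in S$, and the claim follows at once. I do not anticipate any genuine obstacle: this is simply the degenerate case $x\in\mu_{0}$ that complements Theorem \ref{Th:3.8}, and the only point warranting a word of justification is that $\mu_{0}$ absorbs multiplication by elements of $S$ so that $x\alpha s\gamma y\in\mu_{0}$, which is immediate once Theorem \ref{Charc. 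Prime} tells us $\mu_{0}$ is an h-ideal.
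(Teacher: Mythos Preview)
Your proposal is correct and follows essentially the same route as the paper: the paper's proof simply notes that $x\in\mu_{0}$ forces $x\alpha s\gamma y\in\mu_{0}$ (since $\mu_{0}$ is an h-ideal), whence every term in the infimum equals $1$. You are more explicit than the paper in invoking Theorem~\ref{Charc. Prime} to justify that $\mu_{0}$ is an h-ideal, and your alternative via Proposition~\ref{Th:3.4} is a tidy shortcut the paper does not mention.
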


\begin{proof} Let $\mu $ be a prime fuzzy h-ideal of S and y $\in$ S. Then $x \gamma y \in \mu_{0}$ for all $s
\in S$ and for all $ \gamma\in \Gamma$ as $x \in \mu_{0}$.
\\So $<x,\mu>(y)=\underset{s\in S}{\underset{\alpha,\gamma\in\Gamma} {\inf}} \mu(x \gamma y)=1=\textbf{1}_{S}(y)$
for all $y \in S$. Hence $<x,\mu>=\textbf{1}_{S}$.
\end{proof}

\begin{Corollary} Let I be an h-ideal of S. If I is prime h-ideal of S
then for $x \in S$, $<x,\lambda_{I}>=\lambda_{I}$ where $x \not\in
I$.\end{Corollary}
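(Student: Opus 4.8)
The plan is to reduce the fuzzy statement to the purely set-theoretic identity $<x,I>=I$ and then transport it through the earlier Proposition that $<x,\lambda_{A}>=\lambda_{<x,A>}$ for any nonempty $A\subseteq S$. Applying that Proposition with $A=I$ gives $<x,\lambda_{I}>=\lambda_{<x,I>}$, so it suffices to show that $<x,I>=I$ whenever $I$ is a prime h-ideal and $x\notin I$; from this, $<x,\lambda_{I}>=\lambda_{<x,I>}=\lambda_{I}$ is immediate.

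To prove $<x,I>=I$, recall $<x,I>=\{y\in S:x\Gamma S\Gamma y\subseteq I\}$. For the inclusion $<x,I>\subseteq I$, take $y\in<x,I>$, so $x\Gamma S\Gamma y\subseteq I$; since $I$ is a prime h-ideal, Theorem \ref{Th:2.3} forces $x\in I$ or $y\in I$, and as $x\notin I$ we conclude $y\in I$. For the reverse inclusion $I\subseteq<x,I>$, let $y\in I$; since $I$ is in particular a left ideal, $x\alpha s\gamma y=(x\alpha s)\gamma y\in I$ for all $s\in S$ and $\alpha,\gamma\in\Gamma$, hence $x\Gamma S\Gamma y\subseteq I$, i.e., $y\in<x,I>$. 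This gives $<x,I>=I$ and completes the argument.

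Alternatively, one can argue entirely on the fuzzy side: verify that $\lambda_{I}$ is a prime fuzzy h-ideal via the characterization in Theorem \ref{Charc. Prime}. Indeed $\lambda_{I}(0)=1$ because $0\in I$; $\operatorname{Im}\lambda_{I}=\{1,0\}$ with $0\in[0,1)$; and $(\lambda_{I})_{0}=\{s\in S:\lambda_{I}(s)=\lambda_{I}(0)\}=I$, which is prime by hypothesis. Since $x\notin I=(\lambda_{I})_{0}$, Theorem \ref{Th:3.8} yields $<x,\lambda_{I}>=\lambda_{I}$ directly. There is essentially no real obstacle here; the only point requiring a moment's care is that a prime h-ideal is by definition proper, so $I\neq S$, which is exactly what guarantees $\lambda_{I}$ is non-constant and that $\operatorname{Im}\lambda_{I}$ is a genuine two-element set as required by the cited theorems.
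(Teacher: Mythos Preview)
Your proposal is correct. In fact, your \emph{alternative} argument---verifying that $\lambda_{I}$ is a prime fuzzy $h$-ideal via Theorem~\ref{Charc. Prime} and then invoking Theorem~\ref{Th:3.8}---is exactly the paper's proof, line for line.

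Your \emph{primary} argument takes a genuinely different route: instead of passing through the fuzzy characterization of primeness, you stay at the set-theoretic level, prove $<x,I>=I$ directly from Theorem~\ref{Th:2.3} and the left-ideal property of $I$, and then transport the equality via the earlier Proposition $<x,\lambda_{A}>=\lambda_{<x,A>}$. This is more elementary in that it never touches Theorems~\ref{Charc. Prime} or~\ref{Th:3.8}, and it makes the corollary essentially equivalent to the set-theoretic fact $<x,I>=I$. The paper's approach, by contrast, treats the corollary as a specialization of the general fuzzy result Theorem~\ref{Th:3.8}, which is tidier if one has already established that theorem but hides the underlying set-theoretic content that your first argument exposes.
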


\begin{proof} Let I be a prime h-ideal of S. Then $\lambda_{I}$ is a
prime fuzzy h-ideal of S. Now $x \not\in I$ implies that $x \not\in
(\lambda_{I})_{0}$. Hence by Theorem \ref{Th:3.8},
$<x,\lambda_{I}>=\lambda_{I}$.
\end{proof}


\begin{Theorem} Let S be a commutative $\Gamma$-hemiring and $\mu$
be a fuzzy subset of S such that $<x,\mu>=\mu$ for all $x \in S$.
Then $\mu$ is constant.\end{Theorem}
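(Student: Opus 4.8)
The plan is to deduce $\mu(x)=\mu(y)$ for every pair $x,y\in S$ purely from the hypothesis together with commutativity of the $\Gamma$-multiplication; since $x,y$ are arbitrary, this is exactly the assertion that $\mu$ is constant. Everything has to be extracted from the defining identity $<z,\mu>(w)=\inf_{s\in S,\ \alpha,\gamma\in\Gamma}\mu(z\alpha s\gamma w)$.

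First I would fix $x,y\in S$ and apply the hypothesis in the form $<y,\mu>=\mu$, evaluated at the point $x$:
$$\mu(x)=<y,\mu>(x)=\inf_{s\in S,\ \alpha,\gamma\in\Gamma}\mu(y\alpha s\gamma x).$$
The crucial step is to rewrite the generic product appearing on the right. Using the associativity axiom $a\alpha(b\beta c)=(a\alpha b)\beta c$ and commutativity of $S$,
$$y\alpha s\gamma x=(y\alpha s)\gamma x=x\gamma(y\alpha s)=x\gamma(s\alpha y)=x\gamma s\alpha y,$$
so that the family $\{\,y\alpha s\gamma x:s\in S,\ \alpha,\gamma\in\Gamma\,\}$ is literally the same subset of $S$ as $\{\,x\gamma s\alpha y:s\in S,\ \alpha,\gamma\in\Gamma\,\}$. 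Renaming the two dummy $\Gamma$-variables then turns the latter family into the one defining $<x,\mu>(y)$, whence
$$\inf_{s\in S,\ \alpha,\gamma\in\Gamma}\mu(y\alpha s\gamma x)=\inf_{s\in S,\ \alpha,\gamma\in\Gamma}\mu(x\alpha s\gamma y)=<x,\mu>(y).$$

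Finally I would invoke the hypothesis once more, this time as $<x,\mu>=\mu$, to obtain $<x,\mu>(y)=\mu(y)$. Chaining the three displayed equalities gives $\mu(x)=\mu(y)$, and since $x,y\in S$ were arbitrary, $\mu$ is constant.

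I expect the only point requiring genuine care to be the middle computation: keeping track of which element of $\Gamma$ multiplies which factor when commutativity and associativity are applied, so that the set equality $\{y\alpha s\gamma x\}=\{x\alpha s\gamma y\}$ (as the $\Gamma$-parameters and $s$ range over $\Gamma$ and $S$) is transparent. Once that is in hand, the rest is merely bookkeeping of infima. Equivalently, one could run the argument through the two inequalities $\mu(y)=<x,\mu>(y)\le\mu(x\alpha s\gamma y)$ and $\mu(x)=<y,\mu>(x)\le\mu(y\alpha s\gamma x)=\mu(x\gamma s\alpha y)$ and then take infima over $s,\alpha,\gamma$, but the symmetric route above is the most economical.
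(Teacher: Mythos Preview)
Your proof is correct and is essentially the paper's own argument: the paper also fixes $x,y\in S$, writes $\mu(y)=\langle x,\mu\rangle(y)=\inf\mu(x\alpha s\gamma y)=\inf\mu(y\alpha s\gamma x)=\langle y,\mu\rangle(x)=\mu(x)$, citing commutativity for the middle equality. Your version is in fact more careful than the paper's, since you explicitly track the $\Gamma$-variable swap $y\alpha s\gamma x = x\gamma s\alpha y$ via associativity and commutativity before relabelling the dummy indices.
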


\begin{proof} Let $x,y \in S$. Then
$\mu(y)=<x,\mu>(y)=\underset{s\in S}{\underset{\alpha,\gamma\in\Gamma} {\inf}} \mu(x\alpha s \gamma
y)=\underset{s\in S}{\underset{\alpha,\gamma\in\Gamma} {\inf}}\mu(y\alpha s \gamma x)$, (since S is a
commutative $\Gamma$-hemiring)$=<y,\mu>(x)=\mu(x)$.
\\Therefore $\mu(x)=\mu(y)$ for all $x,y \in S$. Hence $\mu$ is
constant.
\end{proof}


\begin{thebibliography}{99}

\bibitem{re:Dutta} T.K. Dutta and S.K. Sardar, On the Operator Semirings of a
$\Gamma$-semiring, Southeast Asian Bull.  Math.,
26 (2002) 203-213.
\bibitem{Dutta} {T.K. Dutta, S.K. Sardar: On Matrix $\Gamma-$semirings;
Far East J.Math.Sci.(FJMS)7(1)(2002)17-31}

\bibitem{Henriksen}{M.Henriksen, Ideals in semirings with commutative addition,
Am.Math.Soc.Notices 6(1958)321-}

\bibitem{Iizuka}{K.Iizuka, On the Jacobson radical of semiring, Tohoku Math.J.11(2)
(1959)409-421}

\bibitem{Ybjun}{Y.B.Jun, M.A.\"{O}zt\"{u}rk, S.Z.Song, On Fuzzy h-ideals in
hemiring, Information sciences 162(2004)211-226.}

\bibitem{Jun}{Y.B.Jun, C.Y.Lee, Fuzzy $\Gamma-$rings, Pusom Kyongnam Math,
J.8(2)(1992)63-170}
\bibitem{LaT} {D.R.La Torre, On h-ideals and k-ideals in hemirings, Publ. Math. Debrecen
12(1965)219-226.}
\bibitem{Xma2}{X.Ma, J.Zhan, Fuzzy h-ideals in h-hemiregular and h-simple $\Gamma$-hemirings
,Neural Comput and Applic(2010)19:477-485}
\bibitem{Rao}{M.M.Rao, $\Gamma-$semirings-1, Southeast Asian Bull. of Math.,
19(1995)49-54}

\bibitem{Rosenfeld}{A. Rosenfeld, Fuzzy groups, J. Math. Anal. Appl. 35(1971) 512-517}
\bibitem{SD1}{S.K.Sardar and D.Mandal, Fuzzy h-ideal in $\Gamma$-hemiring, Int. J. Pure Appl.
Math(2009),vol56,no.3,439-450}
\bibitem{SD2}{S.K.Sardar and D.Mandal, Prime(semiprime)fuzzy h-ideal in $\Gamma$-hemiring, Int. J. Comput. Math.Sci,
Accepted}
\bibitem{SD5}{S.K.Sardar and D.Mandal, On Fuzzy h-ideals in h-regular $\Gamma$-hemiring and h-duo $\Gamma$-hemiring, General Mathematics Notes, Accepted}
\bibitem{SD8}{S.K.Sardar and D.Mandal, Fuzzy h-ideals extension in $\Gamma$-hemiring, Anals of Fuzzy Mathematics and Informatics, Accepted}
\bibitem{re:van}{Vandiver, H. S. : Note on a simple type of Algebra in which the
cancellation law of addition does not hold ; Bull. Amer. Math.
Soc., {\bf 40}(1934), 914-920.}
\bibitem{Xie}{Xiang-Yun Xie, Fuzzy ideal extensions of semigroups, Soochow Journal of Mathematics, vol-27,no.-2(2001),
125-138 }
\bibitem{Zadeh} {L.A.Zadeh, Fuzzy Sets, Information and Control 8(1965)338-353.}

\bibitem{Zhan} {J.Zhan, and W.A.Dudek, Fuzzy h-ideals of hemirings,
Information sciences 177(2007)876-886.}



\end{thebibliography}
\end{document}